\documentclass[a4paper,12pt,twoside,reqno,tbtags]{amsart}
\pdfoutput=1
\usepackage[english]{babel}

\newif\ifwias
\wiastrue
\wiasfalse

\ifwias
	\usepackage{wiaspreprint}
\fi

\usepackage{amsmath,amssymb}
\usepackage{mathtools}
\usepackage{color}
\usepackage{upgreek}
\usepackage{subfigure}
\usepackage{caption}
\usepackage{booktabs}
\usepackage{cancel}
\usepackage{multirow}
\usepackage{enumitem}

\usepackage[colorinlistoftodos,textsize=footnotesize]{todonotes}

\usepackage[
        activate={true,nocompatibility},
        final,
        kerning=true,
        spacing=true
        ]{microtype}
\microtypecontext{spacing=nonfrench}
\usepackage{fixltx2e}

\usepackage{tikz}
\usepackage{pgfplots}
\usepackage{pgfplotstable}
\pgfplotsset{compat=1.7}

\usepackage{mathtools}
\mathtoolsset{showonlyrefs}

\usepackage[T1]{fontenc}
\usepackage{lmodern}


\newif\iffinal
\finaltrue
\finalfalse

\iffinal
  \usepackage[useplots]{tikzconditional}
  \usepackage[ruled,vlined]{algorithm2e}
\else
  \usepackage[noexternal]{tikzconditional}
  \usepackage[ruled,vlined,norelsize]{algorithm2e}
\fi

\tikzsetexternalprefix{figures/}
\tikzsetfigurename{ttals-}


\numberwithin{equation}{section}


\theoremstyle{definition}
\newtheorem{definition}{Definition}[section]

\theoremstyle{remark}
\newtheorem{remark}[definition]{Remark}

\theoremstyle{plain}
\newtheorem{theorem}[definition]{Theorem}
\newtheorem{corollary}[definition]{Corollary}
\newtheorem{lemma}[definition]{Lemma}
\newtheorem{proposition}[definition]{Proposition}




\renewcommand*{\phi}{\varphi}
\let\ntheta\theta
\renewcommand*{\theta}{\vartheta}
\renewcommand*{\rho}{\varrho}

\renewcommand*{\Gamma}{\varGamma}
\renewcommand*{\Delta}{\varDelta}
\renewcommand*{\Theta}{\varTheta}
\renewcommand*{\Lambda}{\varLambda}
\renewcommand*{\Xi}{\varXi}
\renewcommand*{\Pi}{\varPi}
\renewcommand*{\Sigma}{\varSigma}
\renewcommand*{\Upsilon}{\varUpsilon}
\renewcommand*{\Phi}{\varPhi}
\renewcommand*{\Psi}{\varPsi}
\renewcommand*{\Omega}{\varOmega}


\newcommand*{\latinabbr}[1]{#1}
\newcommand*{\ie}{\latinabbr{i.e.}} 


\newcommand*{\defeq}{\ensuremath{\coloneqq}}

\newcommand*{\dd}{\ensuremath{\mathrm{d}}}
\newcommand*{\dx}[1]{\ensuremath{\,\dd{#1}}}



\DeclareMathOperator{\id}{id}

\DeclareMathOperator{\supp}{supp}
\DeclareMathOperator{\diam}{diam}
\DeclareMathOperator{\ddiv}{div}

\newcommand*{\essinf}{\mathop{\mathrm{ess\,inf}}\displaylimits}
\newcommand*{\esssup}{\mathop{\mathrm{ess\,sup}}\displaylimits}
\newcommand*{\jump}[1]{\ensuremath{ [\![ #1 ]\!] }}


\newcommand*{\abs}[2][{}]{\ensuremath{#1\lvert#2#1\rvert}}
\newcommand*{\norm}[2][{}]{\ensuremath{#1\lVert#2#1\rVert}}

\newcommand*{\mset}[3][{}]{\ensuremath{#1\{ #2 \,;\: #3#1\}}}

\def\beq{\begin{equation}}
\def\eeq{\end{equation}}

\def\mcA{{\mathcal{A}}}
\def\mcB{{\mathcal{B}}}

\def\mcN{{\mathcal{N}}}
\def\mcF{{\mathcal{F}}}
\def\mcI{{\mathcal{I}}}
\def\mcT{{\mathcal{T}}}

\def\mcR{{\mathcal{R}}}
\def\mcS{{\mathcal{S}}}

\def\mcV{{\mathcal{V}}}

\def\mcX{{\mathcal{X}}}

\def\bbR{\mathbb{R}}
\def\bbN{\mathbb{N}}

\def\ainf{{\check a}}
\def\asup{{\hat a}}

\def\ainf{{\check a}}
\def\asup{{\hat a}}

\def\bfbeta{{\boldsymbol{\beta}}}
\def\bfsigma{{\boldsymbol{\sigma}}}

\def\Hthrh{H^{\tau_{\theta\rho}}}

\def\Nmc{{N_{\mathrm{MC}}}}

\newcommand{\dgam}[1]{\mathrm{d}\gamma(#1)}
\newcommand{\dgamthrh}[1]{\mathrm{d}\gamma_{\vartheta\rho}(#1)}


\SetKw{DownTo}{downto}
\SetKw{UpTo}{upto}
\newcommand{\disc}{_{\mathrm{disc}}}
\newcommand{\Det}{_{\mathrm{det}}}
\newcommand{\param}{_{\mathrm{param}}}
\DeclareMathOperator{\est}{est}
\DeclareMathOperator{\err}{err}
\DeclareMathOperator{\Solve}{Solve}
\DeclareMathOperator{\Estimate}{Estimate}
\DeclareMathOperator{\Mark}{Mark}
\DeclareMathOperator{\Refine}{Refine}

\DeclareMathOperator{\res}{res}


%


\ifwias
\title[Adaptive lognormal TT SGFEM]
{Adaptive stochastic Galerkin FEM for lognormal coefficients in hierarchical tensor representations}
\date{June 29, 2018 (revision: November 1, 2018)}

\pagefootright{Berlin, June 29, 2018/rev. November 1, 2018}
\nopreprint{2515}
\selectlanguage{english}  
\nopreyear{2018}

\author[M. Eigel, M. Marschall, M. Pfeffer, R. Schneider]{
Martin Eigel\footnote{Weierstrass Institute\\
		   			  Mohrenstr. 39\\
					  10117 Berlin\\
					  Germany\\ 
					  E-Mail: martin.eigel@wias-berlin.de \\ 
					  \phantom{E-Mail: }manuel.marschall@wias-berlin.de}, 
Manuel Marschall\addmark{\,\,}{1}, 
Max Pfeffer\footnote{Max Planck Institute for Mathematics in the Sciences\\ 
					 Inselstr. 22\\
					 04103 Leipzig\\ 
					 Germany\\ 
					 E-Mail: max.pfeffer@mis.mpg.de}, 
Reinhold Schneider\footnote{TU Berlin\\
							Stra\ss e des 17. Juni 136\\
							10623 Berlin\\
							Germany\\ 
							E-Mail: schneidr@math.tu-berlin.de} 
}




\subjclass[2010]{%
 35R60, 
 47B80, 
 60H35, 
 65C20, 
 65N12, 
 65N22, 
 65J10
}
\keywords{Partial differential equations with random coefficients, tensor representation, tensor train, uncertainty quantification, stochastic finite element methods, log-normal, adaptive methods, ALS, low-rank, reduced basis methods}

\else
\title[Adaptive lognormal TT SGFEM]
{Adaptive stochastic Galerkin FEM for lognormal coefficients in hierarchical tensor representations}
\date{\today}

\author[M.\ Eigel]{Martin Eigel}
\address{Weierstrass Institute\\Mohrenstrasse 39\\D-10117 Berlin\\Germany}
\email{martin.eigel@wias-berlin.de}

\author[M.\ Marschall]{Manuel Marschall}
\address{Weierstrass Institute\\Mohrenstrasse 39\\D-10117 Berlin\\Germany}
\email{manuel.marschall@wias-berlin.de}

\author[M.\ Pfeffer]{Max Pfeffer}
\address{Max Planck Institute for Mathematics in the Sciences
		\\Inselstr. 22\\D-04103 Leipzig\\Germany}
\email{max.pfeffer@mis.mpg.de}

\author[R.\ Schneider]{Reinhold Schneider}
\address{TU Berlin\\Stra{\ss}e des 17. Juni 136\\D-10623 Berlin\\Germany}
\email{schneidr@math.tu-berlin.de}

\subjclass[2010]{%
 35R60, 
 47B80, 
 60H35, 
 65C20, 
 65N12, 
 65N22, 
 65J10
}
\keywords{Partial differential equations with random coefficients, tensor representation, tensor train, uncertainty quantification, stochastic finite element methods, lognormal, adaptive methods, ALS, low-rank, reduced basis methods}

\fi

\begin{document}



\selectlanguage{english}
\begin{abstract}
Stochastic Galerkin methods for non-affine coefficient representations are known to cause major difficulties from theoretical and numerical points of view.
In this work, an adaptive Galerkin FE method for linear parametric PDEs with lognormal coefficients discretized in Hermite chaos polynomials is derived.
It employs problem-adapted function spaces to ensure solvability of the variational formulation.
The inherently high computational complexity of the parametric operator is made tractable by using hierarchical tensor representations.
For this, a new tensor train format of the lognormal coefficient is derived and verified numerically.
The central novelty is the derivation of a reliable residual-based a posteriori error estimator.
This can be regarded as a unique feature of stochastic Galerkin methods.
It allows for an adaptive algorithm to steer the refinements of the physical mesh and the anisotropic Wiener chaos polynomial degrees.
For the evaluation of the error estimator to become feasible, a numerically efficient tensor format discretization is developed.
Benchmark examples with unbounded lognormal coefficient fields illustrate the performance of the proposed Galerkin discretization and the fully adaptive algorithm.
\end{abstract}

\maketitle

\section{Introduction}
\label{sec:Introduction}

In the thriving field of Uncertainty Quantification (UQ), efficient numerical methods for the approximate solution of random PDEs have been a topic of vivid research.
As common benchmark problem, one often considers the Darcy equation (as a model for flow through a porous medium) with different types of random coefficients in order to assess the efficiency of a numerical approach.
Two important properties are the length of the expansion of random fields, which often directly translates to the number of independent random variables describing the variability in the model, and the type of dependence on these random variables.
The affine case with uniform random variables has been studied extensively, since it represents a rather simple model which can easily be treated with standard methods.
Opposite to that, the lognormal case with Gaussian random variables is quite challenging, from the analytical as well as numerical point of view.
A theory for the solvability of linear elliptic PDEs with respective unbounded coefficients (and hence a lack of uniform ellipticity) in a variational formulation was only developed recently in~\cite{Gittelson2011a,MR2576514,Mugler2013thesis}.
Computationally, the problems quickly become difficult or even intractable with many stochastic dimensions, which might be required to accurately represent the stochasticity in the random field expansion.
This paper is concerned with the development of an efficient numerical method for this type of problems.

While popular sample-based Monte Carlo methods obtain dimension-independent convergence rates, these are rather low despite often encountered higher regularity of the parameter dependence.
Moreover, such methods can only be used to evaluate functionals of the solution (QoIs = quantities of interest) and an a posteriori error control usually is not feasible reliably.
Some recent developments in this field can e.g. be found in~\cite{herrmann2017multilevel,herrmann2017qmc,kazashi2017quasi,kuo2017multilevel} for the model problem with a lognormal coefficient.
Some ideas on a posteriori adaptivity for Monte Carlo methods can e.g. be found in~\cite{eigel2016adaptive,detommaso2018continuous}.

An alternative are functional (often called spectral) approximations, which for instance are obtained by Stochastic Collocation (SC)~\cite{babuvska2007stochastic,nobile2008sparse,nobile2008anisotropic}, the related Multilevel Quadraure (MLQ)~\cite{harbrecht2017quasi,harbrecht2016multilevel} and Stochastic Galerkin (SG\footnote{we usually use SGFEM for Stochastic Galerkin FEM}) methods.
The latter in particular is popular in the engeneering sciences since it can be perceived as an extension of classical finite element methods (FEM).
These approaches provide a complete parameter to solution map based on which e.g. statistical moments of the stochastic solution can be evaluated.
Notably, the regularity of the solution can be exploited in order to obtain quasi optimal convergence rates.
However, the number of random variables and nonlinear parameter representations have a significant impact on the computational feasibility and techniques for a model order reduction are required.
Collocation methods with pointwise evaluations in the parameter space are usually constructed either based on some a priori knowledge or by means of an iterative refinement algorithm which takes into account the hierarchical surplus on possible new discretization levels.
While these approaches work reasonably well, methods for a reliable error control do not seem immediate since the approximation relies only on interpolation properties.
Nevertheless, for the affine case and under certain assumptions, first ideas were recently presented in~\cite{guignard2017posteriori}.

The computationally more elaborate Stochastic Galerkin methods carry out an orthogonal projection with respect to the energy norm onto a discrete space which is usually spanned by a tensor basis consisting of FE basis functions in physical space and polynomial chaos polynomials orthogonal with respect to the joint parameter measure in (stochastic) parameter space.
The use of global polynomials is justified by the high (analytic) regularity of the solution map with respect to the parameters~\cite{MR2728424,MR3163401,MR2763359}.
However, in particular the large computational cost of Galerkin methods make adaptivity and model reduction techniques a necessity.

In order to achieve this, different paths have been pursued successfully.
As a first approach, sparse approximations as in~\cite{EGSZ,EGSZ2,EM} or~\cite{MR3177362,crowder2018efficient,bespalov2016efficient} with either a residual based or an hierarchical a posteriori error estimators can be computed.
Here, the aim is to restrict an exponentially large discrete basis to the most relevant functions explictly by iteratively constructing a quasi-optimal subset.
In~\cite{EGSZ2}, convergence of the adaptive algorithm could be shown.
Moreover, adjoint based error estimators are considered in~\cite{bryant2015error,prudhomme2015adaptive}.

As a second approach, an adaptive discretization in hierarchical tensor representations can be derived as described in~\cite{EPS}.
These modern compression formats have lately been investigated intensively in the numerical analysis community~\cite{Hackbusch2014,Bachmayr2016,nouy2016low,Oseledets2009}.
It has been examined that with appropriate assumptions the curse of dimensionality can be alleviated, particularly so when employed with typical random PDE problems in UQ, see~\cite{dolgov2015polynomial,dolgov2017hybrid} for examples with sample-based reconstruction strategies.
Such representations can be understood as an implicit model order reduction technique, closely related to (but more general than e.g.) reduced basis methods.

In the mentioned adaptive approaches, the FE mesh for the physical space and the parametric polynomial chaos space are adapted automatically with respect to the considered problem.
In the case of tensor approximations, also the ranks of the representation are adjusted.

However, in all adaptive SGFEM research, so far only the affine case with uniform elliptic coefficient has been considered.
In this paper, we extend the ASGFEM approach developed in~\cite{EPS} to the significantly more involved case of lognormal (non-affine) coefficients.
This poses several severe complications analytically and numerically.
Analytical aspects have recently been tackled in~\cite{MR2576514,MR2649152,Mugler2013thesis,MR3163401}.
Numerically, in particular computationally efficient Galerkin methods are quite diffucult to construct for this case and have not been devised.
Compression techniques and adaptivity most certainly are required in order to make these problems tractable with SGFEM, as described in this paper.
Of particular interest is the construction of a computable a posteriori error estimator, which also greatly benefits from using tensor formats.
In order to obtain a well-posed discretization, problem adapted spaces according to the presentation in~\cite{MR2805155} are used.

Main contributions of this work are a representation of the coefficient in the tensor train (TT) format, the operator discretization in tensor format and the derivation of an reliable residual based error estimatator.
This then serves as the basis for an adaptive algorithm which steers all discretization parameters of the SGFEM.
The performance of the proposed method is demonstrated with some benchmark problems.
Here, the used field models are not artificially bounded or shifted away from zero.

We point out that, to our knowledge, an SGFEM for the lognormal case so far has only been practically computed in the weighted function space setting in~\cite{mugler2013convergence} for a small 1d model as proof of concept.
Moreover, there has not been any adaptive numerical method with reliable a posteriori error estimation as derived in this work.
However, we note that our approach relies on the assumption that the coefficient is discretized sufficiently accurately and hence the related discretization error can be neglected.
In practice, this can be ensured with high probability by sampling the error of the discrete coefficient.
Additionally, since constants in the error bound can become quite large, we interpret the error estimate as a refinement indicator.

It should be noted that a functional adaptive evluation of the forward map allows for the derivation of an explicit adaptive Bayesian inversion with functional tensor representations as in~\cite{EMS}.
The results of the present work lay the ground for a similar approach with a Gaussian prior assumption.
This will be the topic of future research.
Moreover, the described approach enables to construct SGFEM with arbitrary densities (approximated in hierarchical tensor formats).
This generalization should also be examined in more detail in further research.
Lastly, while sparse discretizations seem infeasible for the lognormal coefficient, a transformation~\cite{ullmann2012efficient} yields a convection-diffusion formulation of the problem with affine parameter dependence, which then again is amenable to an adaptive sparse SGFEM.
This direction is examined in~\cite{EG}.

The structure of this paper is as follows: We first introduce the setting of parametric PDEs with our model problem in Section~\ref{sec:parametric-pde}.
The variational formulation in problem dependent weighted function spaces and the finite element (FE) setting are described in Section~\ref{sec:Galerkin Discretization}.
The employed tensor formats and the tensor representations of the coefficient and the differential operator are examined in Section~\ref{sec:Tensor Decomposition Formats}.
As a central part, in Section~\ref{sec:Error Estimates} we derive the a posteriori error estimators and define a fully adaptive algorithm in Section~\ref{sec:Fully Adaptive Algorithm}, including efficient ways to compute error indicators for physical and stochastic refinement.
Numerical examples in Section~\ref{sec:Numerical Experiments} illustrate the performance of the presented method and conclude the paper.

\section{Setting and Discretization}
\label{sec:parametric-pde}

In the following, we introduce the considered model problem formally, present its weak formulation and describe the employed discretizations in finite dimensional function spaces.
We closely follow the presentations in~\cite{MR2805155,MR3163401,MR2649152} regarding the lognormal problem in problem-dependent function spaces.
In~\cite{EPS}, a related formulation for the solution and evaluation of a posteriori error estimators for parametric PDEs with affine coefficient fields in hierarchical tensor formats is derived.

\subsection{Model problem}
\label{sec:model-problem}

We assume some bounded domain $D \subset\bbR^d$, $d=1,2,3$, with Lipschitz boundary $\partial D$ and a probability space $(\Omega, \mathcal F, \mathbb{P})$.
For $\mathbb{P}$-almost all $\omega\in\Omega$, we consider the random elliptic problem
\begin{equation}
\label{eq:model}
\left\{ \begin{aligned}
-\ddiv(a(x,\omega)\nabla u(x,\omega)) &= f(x)\quad\text{ in $D$},\\
u(x,\omega) &= 0\quad\qquad\text{on $\partial D$.}
\end{aligned} \right.
\end{equation}

The coefficient $a:D\times\Omega\mapsto\bbR$ denotes a lognormal, isotropic diffusion coefficient, i.e., $\log(a)$ is an isotropic Gaussian random field.

\begin{remark}
The source term $f\in L^2(D)$ is assumed deterministic.
However, it would not introduce fundamental additional difficulties to also model $f$ and the boundary conditions as stochastic fields not correlated to the coefficient $a(x, \omega)$ as long as appropriate integrability of the data is given.
\end{remark}

For the coefficient $a(x,\omega)$ of~\eqref{eq:model}, we assume a Karhunen-Lo\`eve type expansion of $b:=\log(a)$ of the form
\begin{equation}
\label{eq:a-KL}
b(x,\omega) = \sum_{\ell=1}^\infty b_\ell(x)Y_\ell(\omega),\qquad x\in D,\quad  \mathbb{P}\text{- almost all } \omega\in\Omega.
\end{equation}
Here, the parameter vector $Y = (Y_\ell)_{\ell\in\bbN}$ consists of independent standard normal Gaussian random variables in $\bbR$.
Then, by passing to the image space $(\bbR^\bbN,\mcB(\bbR^\bbN),\gamma)$ with the Borel $\sigma$-algebra $\mcB(\bbR^\bbN)$ of all open sets of $\bbR^\bbN$ and the Gaussian product measure
\begin{align}
\gamma := \bigotimes_{\ell\in\bbN} \gamma_\ell\quad &\text{with}\quad \gamma_\ell:=\gamma_1:=\mcN_1:=\mcN(0,1) \quad \\
&\text{and}\quad \dx{\gamma_1}(y_\ell)=\frac{1}{\sqrt{2\pi}}\exp(-y_\ell^2/2)\dx{y_\ell},
\end{align} 
we can consider the parameter vector $y=(y_\ell)_{\ell\in\bbN}=(Y_\ell(\omega))_{\ell\in\bbN}$, $\omega\in\Omega$.

For any sequence $\bfbeta\in\ell^1(\bbN)$ with
\begin{equation}
  \beta_\ell := \norm{b_\ell}_{L^\infty(D)} \qquad\text{and}\qquad \bfbeta = (\beta_\ell)_{\ell\in\bbN},
\end{equation}
we define the set
\begin{equation}
\Gamma_\bfbeta := \bigg\{ y\in\bbR^\bbN:\ \sum_{\ell=1}^\infty \beta_\ell\abs{y_\ell} < \infty \bigg\}.
\end{equation}
The set $\Gamma_\bfbeta$ of admissible parameter vectors is $\gamma$-measurable and of full measure.
\begin{lemma}[\cite{MR3163401} Lem. 2.1]
\label{lem:gamma-prop}
For any sequence $\bfbeta\in\ell^1(\bbN)$, there holds
\[
\Gamma_\bfbeta\in\mcB(\bbR^\bbN)\quad\text{and}\quad \gamma(\Gamma_\bfbeta) = 1.
\]
\end{lemma}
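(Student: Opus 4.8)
The plan is to establish the two assertions independently with elementary tools on the product space $(\bbR^\bbN,\mcB(\bbR^\bbN),\gamma)$. First I would address measurability. For $N\in\bbN$ set $S_N(y):=\sum_{\ell=1}^N \beta_\ell\abs{y_\ell}$; each $S_N\colon\bbR^\bbN\to[0,\infty)$ is continuous (indeed it depends on finitely many coordinates only), hence $\mcB(\bbR^\bbN)$-measurable. Since $\beta_\ell\ge 0$, the sequence $(S_N(y))_N$ is nondecreasing, so $S(y):=\sum_{\ell=1}^\infty\beta_\ell\abs{y_\ell}=\sup_N S_N(y)$ is a measurable map into $[0,\infty]$, and
\[
  \Gamma_\bfbeta=\{y:\ S(y)<\infty\}=\bigcup_{M\in\bbN}\bigcap_{N\in\bbN}\{y:\ S_N(y)\le M\}\in\mcB(\bbR^\bbN).
\]

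For the full-measure claim I would compute the $\gamma$-integral of the nonnegative function $S$ using Tonelli's theorem, which applies precisely because every summand is nonnegative and depends on a single coordinate while $\gamma=\bigotimes_{\ell}\gamma_1$ is a product of standard Gaussians:
\[
  \int_{\bbR^\bbN} S(y)\,\dgam{y}
  =\sum_{\ell=1}^\infty\beta_\ell\int_{\bbR}\abs{t}\dx{\gamma_1}(t)
  =\sqrt{2/\pi}\,\sum_{\ell=1}^\infty\beta_\ell
  =\sqrt{2/\pi}\,\norm{\bfbeta}_{\ell^1(\bbN)}<\infty ,
\]
where I use the standard Gaussian absolute moment $\int_{\bbR}\abs{t}(2\pi)^{-1/2}e^{-t^2/2}\dx{t}=\sqrt{2/\pi}$ and the hypothesis $\bfbeta\in\ell^1(\bbN)$. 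A nonnegative measurable function with finite integral is finite almost everywhere, hence $\gamma(\{S=\infty\})=0$, i.e.\ $\gamma(\Gamma_\bfbeta)=1$.

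There is no real obstacle in this argument; the only two points that merit a word of care are that the infinite sum $S$ is genuinely measurable on the infinite-dimensional space $\bbR^\bbN$ --- which is why I write $\Gamma_\bfbeta$ explicitly as the above countable union of countable intersections of cylinder sets --- and that the interchange of summation and integration is licensed by Tonelli rather than Fubini, all terms being nonnegative. If one wished to avoid even the explicit Gaussian moment, the almost sure convergence of $\sum_\ell\beta_\ell\abs{Y_\ell}$ could instead be deduced from the Borel--Cantelli lemma or Kolmogorov's three-series theorem, but the one-line moment computation above is the most economical route.
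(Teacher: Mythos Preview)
Your argument is correct. Note, however, that the present paper does not supply its own proof of this lemma: it merely quotes the statement from the cited reference and uses it as a black box. There is therefore nothing in the paper to compare your proof against. That said, the route you take --- measurability of $S=\sup_N S_N$ via cylinder functions, followed by $\int S\,\dd\gamma=\sqrt{2/\pi}\,\norm{\bfbeta}_{\ell^1}<\infty$ through Tonelli and the first absolute Gaussian moment --- is precisely the standard proof one finds in the literature for this fact, so you may safely regard your argument as equivalent to what the cited source does.
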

For any $y\in\Gamma_\bfbeta$, we define the deterministic parametric coefficient
\begin{equation}
\label{eq:a-param}
a(x,y) = \exp(b(x,y)) = \exp\left(\sum_{\ell=1}^\infty b_\ell(x) y_\ell \right),\qquad x\in D.
\end{equation}
This series converges in $L^\infty(D)$ for all $y\in\Gamma_\bfbeta$.
\begin{lemma}[\cite{MR3163401} Lemma 2.2]
\label{lem:a-bounds}
For all $y\in\Gamma_\bfbeta$, the diffusion coefficient~\eqref{eq:a-param} is well-defined and satisfies
\begin{equation}
0 < \ainf(y) := \essinf_{x\in D} a(x,y)\leq a(x,y) \leq \esssup_{x\in D} a(x,y) =: \asup(y) < \infty,
\end{equation}
with
\begin{align}
\asup(y) \leq \exp\bigg(\sum_{\ell=1}^\infty \beta_\ell\abs{y_\ell} \bigg)\quad\text{and}\quad
\ainf(y) \geq \exp\bigg(-\sum_{\ell=1}^\infty \beta_\ell\abs{y_\ell} \bigg).
\end{align}
\end{lemma}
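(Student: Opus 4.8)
The plan is to handle the two assertions of the lemma separately: first, that for every $y\in\Gamma_\bfbeta$ the coefficient $a(\cdot,y)=\exp(b(\cdot,y))$ is a well-defined element of $L^\infty(D)$; and second, the two-sided bounds together with the explicit estimates on $\ainf(y)$ and $\asup(y)$. The only structural input is that $\suml\beta_\ell\abs{y_\ell}<\infty$, which holds by the very definition of $\Gamma_\bfbeta$, supplemented by completeness of $L^\infty(D)$ and elementary monotonicity/positivity of the exponential.

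First I would verify that the partial sums $b^N(\cdot,y):=\sum_{\ell=1}^N b_\ell(\cdot)y_\ell$ form a Cauchy sequence in $L^\infty(D)$: for $M<N$,
\[
\norm{b^N(\cdot,y)-b^M(\cdot,y)}_{L^\infty(D)}\leq\sum_{\ell=M+1}^N\norm{b_\ell}_{L^\infty(D)}\abs{y_\ell}=\sum_{\ell=M+1}^N\beta_\ell\abs{y_\ell}\xrightarrow[M,N\to\infty]{}0 .
\]
Hence $b(\cdot,y)\in L^\infty(D)$ with $\norm{b(\cdot,y)}_{L^\infty(D)}\leq S(y):=\suml\beta_\ell\abs{y_\ell}<\infty$. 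Since $\exp$ is Lipschitz on the bounded interval $[-S(y),S(y)]$ and all $b^N(\cdot,y)$ take values there (a.e.), the images $\exp(b^N(\cdot,y))$ also converge in $L^\infty(D)$, and their limit is a version of $\exp(b(\cdot,y))=a(\cdot,y)$. This already gives well-definedness in $L^\infty(D)$ and, in particular, $\asup(y)<\infty$.

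For the bounds themselves I would argue pointwise almost everywhere. For each $\ell$ choose a null set $N_\ell\subset D$ outside of which $\abs{b_\ell(x)}\leq\beta_\ell$; on the complement of the still-null union $\bigcup_\ell N_\ell$ the triangle inequality applied to the convergent series yields $\abs{b(x,y)}\leq\suml\abs{b_\ell(x)}\abs{y_\ell}\leq S(y)$. Monotonicity and strict positivity of $\exp$ then give
\[
0<\exp\bigl(-S(y)\bigr)\leq a(x,y)\leq\exp\bigl(S(y)\bigr)\qquad\text{for a.e.\ }x\in D ,
\]
and passing to the essential infimum and supremum over $x\in D$ produces exactly the claimed chain $0<\ainf(y)\leq a(x,y)\leq\asup(y)<\infty$ together with $\ainf(y)\geq\exp(-S(y))$ and $\asup(y)\leq\exp(S(y))$.

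Since the statement is essentially bookkeeping built on Lemma~\ref{lem:gamma-prop}, I do not anticipate a genuine obstacle. The one place to be careful is the last step: the uniform-in-$x$ bound must be derived for the \emph{infinite sum} only after discarding the countable union of the exceptional null sets coming from the individual estimates $\norm{b_\ell}_{L^\infty(D)}=\beta_\ell$, rather than by naively interchanging $\esssup_{x}$ with the series. Everything else follows from the completeness of $L^\infty(D)$ and the continuity of $\exp$.
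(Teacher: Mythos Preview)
Your argument is correct. The paper itself does not supply a proof of this lemma; it is quoted verbatim from the cited reference, so there is no in-paper proof to compare against. Your approach---establishing $L^\infty(D)$-convergence of the partial sums $b^N(\cdot,y)$ via the absolute summability $\sum_\ell\beta_\ell\abs{y_\ell}<\infty$, transferring this to $a(\cdot,y)$ by Lipschitz continuity of $\exp$ on $[-S(y),S(y)]$, and then obtaining the pointwise two-sided bound after discarding the countable union of exceptional null sets---is the standard and essentially the only natural route, and it matches what one finds in the original source.
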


Due to Lemmas~\ref{lem:gamma-prop} and~\ref{lem:a-bounds}, we consider $\Gamma=\Gamma_\bfbeta$ as the parameter space instead of $\bbR^\bbN$.
By Lemma~\ref{lem:a-bounds}, the stochastic coefficient $a(x,y)$ is well defined, bounded from above and admits a positive lower bound for almost all $y\in\Gamma$.
Thus, the equations~\eqref{eq:model} and~\eqref{eq:a-param} have a unique solution $u(y)\in \mcX$ for almost all $y\in\Gamma$.

Let $\mcX:=H^1_0(D)$ denote the closed subspace of functions in the Sobolev space $H^1(D)$ with vanishing boundary values in the sense of trace and define the norm
\begin{equation}
\norm{v}_\mcX := \left(\int_D \abs{\nabla v(x)}^2\dx{x} \right)^{1/2}.
\end{equation}
We denote by $\langle\cdot,\cdot\rangle=\langle\cdot,\cdot\rangle_{\mcX^\ast,\mcX}$ the duality pairing of $\mcX^\ast$ and $\mcX$ and consider $f$ as an element of the dual $\mcX^\ast$.

For any $y\in\Gamma$, the variational formulation of~\eqref{eq:model} reads: find $u(y)\in \mcX$ such that
\begin{align}
\label{eq:weak-param-eq}
B(u(y),v;y) &= \langle f, v \rangle,\qquad \text{for all } v\in \mcX,
\intertext{where $B:\mcX \times \mcX \times \Gamma \to \bbR$ is defined by}
B(u(y),v;y) &:= \int_D a(x,y)\nabla u(y) \cdot\nabla v\dx{x}.
\end{align}
Hence, pathwise existence and uniqueness of the solution $u(y)$ is obtained by the Lax-Milgram lemma due to uniform ellipticity for any fixed $y\in\Gamma$.
In particular, for all $y\in\Gamma$,~\eqref{eq:weak-param-eq}, it holds
\begin{equation}
  \label{eq:u-regularity}
\norm{u(y)} \leq \frac{1}{\ainf(y)}\norm{f}_{\mcX^\ast},
\end{equation}
with some $0 < \ainf(y)\leq a(x,y)$ on $D$.
The integration of~\eqref{eq:weak-param-eq} over $\Gamma$ with respect to the standard normal Gaussian measure $\gamma$ does not lead to a well-defined problem since the coefficient $a(x,y)$ is not uniformly bounded in $y\in\Gamma$ and not bounded away from zero.
Hence, a more involved approach has to be pursued, which is elaborated in Section~\ref{sec:weak-formulation}.
Alternative results for this equation were presented in~\cite{MR2805155, MR2649152,MR2576514}.

The formulation of~\eqref{eq:model} as a parametric deterministic elliptic problem with solution $u(y)\in \mcX$ for each parameter $y\in\Gamma$ reads
\begin{equation}
-\ddiv(a(x,y)\nabla u(x,y)) = f(x)\quad\text{for } x\in D,\quad u(x,y) = 0\quad\text{for } x\in\partial D.
\end{equation}

\section{Variational Formulation and Discretization}
\label{sec:Galerkin Discretization}

This section is concerned with the introduction of appropriate function spaces required for the discretization of the model problem.
In particular, a problem-adapted probability measure is introduced which allows for a well-defined formulation of the weak problem rescaled polynomial chaos basis.

\subsection{Problem-adapted function spaces}
\label{sec:problem-adapted}

Let $\mcF$ be the set of finitely supported multi-indices
\begin{equation}
\mcF := \mset{\mu\in\bbN_0^\infty}{\abs{\supp\mu} <\infty}\quad\text{where}\quad \supp\mu := \mset{m\in\bbN}{\mu_m\neq 0}.
\end{equation}
A full tensor index set of order $M\in\bbN$ is defined by
\begin{align}
\Lambda &:= \{(\mu_1,\ldots,\mu_M,0,\ldots) \in \mathcal F : \mu_m = 0,\ldots, d_m-1, \ m = 1,\ldots,M \} \\
&\simeq \Lambda_1 \times \ldots \times \Lambda_M \times \{0\} \ldots \, \subset\mcF, \\
\intertext{with complete index sets of size $d_m$ given by}
\Lambda_m &:= \{0,\ldots,d_m-1\}, \quad m = 1,\ldots,M.
\end{align}
For any such subset $\Lambda\subset\mcF$, we define $\supp\Lambda := \bigcup_{\mu\in\Lambda} \supp\mu \subset \bbN$.

We denote by $(H_n)_{n=0}^\infty$ the orthonormal basis of $L^2(\bbR, \gamma_m) = L^2(\bbR, \gamma_1)$ with respect to the standard Gaussian measure consisting of Hermite polynomials $H_n$ of degree $n\in\bbN_0$ on $\bbR$.
An orthogonal basis of $L^2(\Gamma, \gamma)$ is obtained by tensorization of the univariate polynomials, see~\cite{MR2805155,Ullmann2008thesis}.
To reduce notation, we drop the explicit dependency on the sigma-algebra which is always assumed to be rich enough.
For any multi-index $\mu\in\mcF$, the tensor product polynomial $H_\mu \defeq \bigotimes_{m=1}^\infty H_{\mu_m}$ in $y\in\Gamma$ is expressed as the finite product
\begin{equation}
H_\mu(y) = \prod_{m=1}^ \infty H_{\mu_m}(y_m) = \prod_{m\in\supp \mu}H_{\mu_m}(y_m).
\end{equation}
%
For practical computations, an analytic expression for the triple product of Hermite polynomials can be used.
\begin{lemma}[\cite{Ullmann2008thesis,malliavin2015stochastic}]
\label{lem:tripleH}
For $\mu,\nu,\xi\in\mcF$, $m\in\bbN$, it holds
\begin{equation}
\label{eq:Hermform}
H_{\nu_m} H_{\mu_m} = \sum_{\xi_m = 0}^{\mathrm{min}(\nu_m,\mu_m)} \kappa_{\nu_m,\mu_m,\nu_m + \mu_m - 2\xi_m} H_{\nu_m + \mu_m - 2\xi_m},
\end{equation}
where for $\eta_m = \nu_m + \mu_m - 2\xi_m$
\begin{align}
\label{eq:betacoeffs}
\kappa_{\nu_m,\mu_m,\eta_m} &:= \int_\bbR H_{\nu_m}(y_m) H_{\mu_m}(y_m) H_{\eta_m}(y_m) \dx{\gamma_m}(y_m) \\
&=
\begin{cases}
\frac{\sqrt{\nu_m!\mu_m!\eta_m!}}{\xi_m!(\nu_m - \xi_m)!(\mu_m - \xi_m)!}, \quad \begin{split}
&\nu_m + \mu_m - \eta_m \text{ is even and } \\
&|\nu_m - \mu_m| \leq \eta_m \leq \nu_m + \mu_m, \\
\end{split}\\
0, &\text{otherwise}.
\end{cases}
\end{align}
\end{lemma}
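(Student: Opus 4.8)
I would deduce both assertions from the exponential generating function of the Hermite polynomials together with the fact that $(H_n)_{n\ge0}$ is an orthonormal basis of $L^2(\bbR,\gamma_1)$. Since the $H_n$ here are normalised to be orthonormal, they relate to the classical probabilists' Hermite polynomials $\mathrm{He}_n$ by $H_n = \mathrm{He}_n/\sqrt{n!}$, and the latter satisfy $\sum_{n\ge0}\mathrm{He}_n(y)\,t^n/n! = \exp(yt-t^2/2)$. The only auxiliary ingredient needed is the Gaussian moment generating identity $\int_\bbR\exp(sy)\dx{\gamma_1}(y)=\exp(s^2/2)$.

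\textbf{Step 1: the triple product integral.} Forming the product of three generating functions with independent parameters $r,s,t$ and integrating against $\gamma_1$, the above identity gives
\[
\int_\bbR \exp\!\Big((r+s+t)y - \tfrac{r^2+s^2+t^2}{2}\Big)\dx{\gamma_1}(y) = \exp(rs+rt+st).
\]
Expanding the left-hand side as $\sum_{a,b,c\ge0}\big(\int_\bbR\mathrm{He}_a\mathrm{He}_b\mathrm{He}_c\dx{\gamma_1}\big)\,r^as^bt^c/(a!b!c!)$ (the interchange of sum and integral being legitimate since the generating series are entire and $\gamma_1$ has moments of all orders) and the right-hand side as $\sum_{i,j,k\ge0}r^{i+j}s^{i+k}t^{j+k}/(i!j!k!)$, comparison of the coefficient of $r^as^bt^c$ forces the linear system $i+j=a$, $i+k=b$, $j+k=c$, whose unique solution is $i=(a+b-c)/2$, $j=(a+c-b)/2$, $k=(b+c-a)/2$. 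Hence $\int_\bbR\mathrm{He}_a\mathrm{He}_b\mathrm{He}_c\dx{\gamma_1} = a!b!c!/(i!j!k!)$ when $i,j,k\in\bbN_0$ --- that is, when $a+b+c$ is even and $|a-b|\le c\le a+b$ --- and vanishes otherwise. Dividing by $\sqrt{a!b!c!}$ and setting $a=\nu_m$, $b=\mu_m$, $c=\eta_m=\nu_m+\mu_m-2\xi_m$, so that $i=\xi_m$, $j=\nu_m-\xi_m$, $k=\mu_m-\xi_m$, produces exactly~\eqref{eq:betacoeffs}, the parity and triangle constraints becoming ``$\nu_m+\mu_m-\eta_m$ even'' and ``$|\nu_m-\mu_m|\le\eta_m\le\nu_m+\mu_m$''.

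\textbf{Step 2: the linearisation.} The product $H_{\nu_m}H_{\mu_m}$ is a polynomial of degree $\nu_m+\mu_m$, hence lies in $\Span\{H_0,\dots,H_{\nu_m+\mu_m}\}$. Expanding it in this orthonormal system, its $H_{\eta_m}$-coefficient equals $\int_\bbR H_{\nu_m}H_{\mu_m}H_{\eta_m}\dx{\gamma_1}=\kappa_{\nu_m,\mu_m,\eta_m}$. By Step~1 this coefficient vanishes unless $\nu_m+\mu_m-\eta_m$ is even and $\eta_m\ge|\nu_m-\mu_m|$; writing the surviving indices as $\eta_m=\nu_m+\mu_m-2\xi_m$ with $\xi_m$ running from $0$ (giving $\eta_m=\nu_m+\mu_m$) up to $\min(\nu_m,\mu_m)$ (giving $\eta_m=|\nu_m-\mu_m|$) reproduces precisely the sum in~\eqref{eq:Hermform}.

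\textbf{Main difficulty.} There is no genuine obstacle, the result being classical; the one point deserving care is the coefficient comparison in Step~1 --- verifying that the unique solution $(i,j,k)$ of the linear system is non-negative exactly when $a+b+c$ is even and the triangle inequalities hold, and that the substitution $c=\nu_m+\mu_m-2\xi_m$ indeed yields the factorials in the denominator of~\eqref{eq:betacoeffs}. A fully equivalent route, should one prefer to avoid the three-parameter expansion, is to first establish the two-variable linearisation $\mathrm{He}_{\nu_m}\mathrm{He}_{\mu_m}=\sum_{\xi_m=0}^{\min(\nu_m,\mu_m)}\binom{\nu_m}{\xi_m}\binom{\mu_m}{\xi_m}\xi_m!\,\mathrm{He}_{\nu_m+\mu_m-2\xi_m}$ by the same generating-function device, normalise to the $H_n$, and recover the integral representation of $\kappa$ a posteriori from orthonormality; the two forms of the denominator agree because $\binom{\nu_m}{\xi_m}\binom{\mu_m}{\xi_m}\xi_m! = \nu_m!\mu_m!/(\xi_m!(\nu_m-\xi_m)!(\mu_m-\xi_m)!)$.
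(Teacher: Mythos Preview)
Your argument is correct and complete. The generating-function computation in Step~1 is the standard derivation of the triple-product integral, and the coefficient matching, the substitution $\eta_m=\nu_m+\mu_m-2\xi_m$, and the re-indexing in Step~2 are all carried out accurately.

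Note, however, that the paper does not supply its own proof of this lemma: it is stated with external citations and used as a known identity. There is therefore nothing in the paper to compare your approach against. Your generating-function route is precisely the classical proof one finds in the cited references, so nothing further is needed.
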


\begin{lemma}[\cite{MR2855645, Mugler2013thesis}]
\label{lem:expH}
Let $Y\sim\mcN_1$, $t\in\bbR$ and $X=\exp(t Y) \in L^2(\bbR,\gamma)$.
The expansion of $X=\exp(t Y)$ in Hermite polynomials is given by
\begin{equation}
X=\exp(t Y) = \sum_{n\in\bbN_0} c_n H_n \quad\text{with}\quad c_n = \frac{t^n}{\sqrt{n!}}\exp(t^2/2).
\end{equation}
\end{lemma}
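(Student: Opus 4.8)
The plan is to compute the Hermite coefficients $c_n = \int_\bbR \exp(ty) H_n(y)\dx{\gamma_1}(y)$ directly, using the generating function of the Hermite polynomials. The relevant identity for the \emph{probabilists'} Hermite polynomials $\tilde H_n$ (the monic, non-normalized version) is
\begin{equation}
\exp\!\left(sy - s^2/2\right) = \sum_{n\in\bbN_0} \frac{s^n}{n!}\,\tilde H_n(y),
\end{equation}
and since the paper uses the $L^2(\bbR,\gamma_1)$-orthonormal variant $H_n = \tilde H_n/\sqrt{n!}$, this becomes $\exp(sy - s^2/2) = \sum_{n} \frac{s^n}{\sqrt{n!}} H_n(y)$. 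I would first state this generating-function identity (citing e.g.~\cite{malliavin2015stochastic} or deriving it from the Rodrigues formula), then rewrite the target $\exp(tY) = \exp(t^2/2)\exp(tY - t^2/2)$ and substitute $s = t$ into the generating function.

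The key steps, in order: (1) recall the generating-function expansion $\exp(sy - s^2/2) = \sum_n \frac{s^n}{\sqrt{n!}} H_n(y)$ in $L^2(\bbR,\gamma_1)$, noting convergence is in $L^2$ (and absolute for fixed $s$); (2) write $X = \exp(tY) = \exp(t^2/2)\cdot \exp(tY - t^2/2)$, pulling the deterministic factor $\exp(t^2/2)$ out front; (3) apply the expansion from step (1) with $s = t$ to obtain $X = \exp(t^2/2)\sum_n \frac{t^n}{\sqrt{n!}} H_n$, which gives $c_n = \frac{t^n}{\sqrt{n!}}\exp(t^2/2)$ by uniqueness of the Hermite expansion; (4) verify $X \in L^2(\bbR,\gamma_1)$, i.e.\ $\sum_n c_n^2 < \infty$, which amounts to $\exp(t^2)\sum_n t^{2n}/n! = \exp(t^2)\exp(t^2) = \exp(2t^2) < \infty$, confirming the claim and simultaneously recovering the well-known fact $\mathbb{E}[\exp(2tY)] = \exp(2t^2)$.

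Alternatively, if one prefers to avoid invoking the generating function, the coefficients can be computed by direct integration: $c_n = \int_\bbR \exp(ty) H_n(y)\dx{\gamma_1}(y)$, using the Rodrigues-type formula $H_n(y)\dx{\gamma_1}(y) = \frac{(-1)^n}{\sqrt{n!}}\frac{\dd^n}{\dd y^n}\!\left(\dx{\gamma_1}(y)\right)$ and integrating by parts $n$ times; each integration by parts moves a derivative onto $\exp(ty)$, producing a factor $t$, and the boundary terms vanish because the Gaussian density decays superexponentially. This yields $c_n = \frac{t^n}{\sqrt{n!}}\int_\bbR \exp(ty)\dx{\gamma_1}(y) = \frac{t^n}{\sqrt{n!}}\exp(t^2/2)$ after recognizing the remaining integral as the Gaussian moment generating function. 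The main (and only mild) obstacle is purely notational: one must be careful about which normalization of the Hermite polynomials is in force — monic probabilists', normalized probabilists', or physicists' — since the constant factors differ, and the paper's convention (orthonormal in $L^2(\bbR,\gamma_1)$) must be matched exactly to land on the stated formula. Once the normalization is pinned down, the proof is a one-line substitution.
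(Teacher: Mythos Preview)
Your proof is correct; both the generating-function route and the Rodrigues/integration-by-parts alternative are standard and lead to the stated coefficients. Note, however, that the paper does not actually prove this lemma: it is stated with citations to \cite{MR2855645,Mugler2013thesis} and used as a known fact, so there is no ``paper's own proof'' to compare against. Your argument supplies exactly the kind of short verification one would expect in those references.
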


We recall some results from~\cite{MR2805155} required in our setting.
Let $\bfsigma = (\sigma_m)_{m\in\bbN}\in\exp(\ell^1(\bbN))$ and define
\begin{equation}
\gamma_\bfsigma := \bigotimes_{m=1}^\infty \gamma_{\sigma_m} := \bigotimes_{m=1}^\infty \mcN_{\sigma_m^2} := \bigotimes_{m=1}^\infty \mcN(0, \sigma_m^2).
\end{equation}
Then, $\dx{\mcN_{\sigma_m^2}} = \zeta_{\bfsigma,m}\dx{\mcN_1}$ where
\begin{equation}
 \zeta_{\bfsigma,m}(y_m) := \frac{1}{\sigma_m}\exp\left(-\frac12(\sigma_m^{-2}-1)y_m^2\right)
\end{equation}
is the one-dimensional Radon-Nikodym derivative of $\gamma_{\sigma_m}$ with respect to $\gamma_m$, i.e., the respective probability density.
We assume that the sequence $\bfsigma$ depends exponentially on $\bfbeta = (\beta_m)_{m\in\bbN}$ and some $\rho\in\bbR$, namely
\begin{equation}
\sigma_m(\rho) := \exp(\rho\beta_m),\quad m\in\bbN,
\end{equation}
and define
\begin{align}
\gamma_\rho &:= \gamma_{\bfsigma(\rho)} \qquad\text{ and }\qquad \zeta_{\rho,m} := \zeta_{\bfsigma(\rho),m}.
\end{align}
%
%
By multiplication, this yields the multivariate identity
\begin{equation}
  \dx{\gamma_\rho}(y) = \zeta_{\rho}(y) \dx{\gamma}(y)
  \quad\text{with}\quad
  \zeta_{\rho}(y) = \prod_{m=1}^\infty \zeta_{\rho, m}(y_m).
\end{equation}
%
%
A basis of orthonormal polynomials with respect to the weighted measure $\gamma_\rho$ can be defined by the transformation
\begin{equation}
\tau_\rho:\bbR^\infty\to\bbR^\infty,\quad (y_m)_{m\in\bbN}\mapsto \left(e^{-\rho\beta_m}y_m\right)_{m\in\bbN}.
\end{equation}
Then, for all $v\in L^2(\Gamma,\gamma)$,
\begin{equation}
\int_\Gamma v(y)\gamma(\dx{y}) = \int_\Gamma v(\tau_\rho(y))\dx{\gamma_\rho}(y).
\end{equation}
We define the scaled Hermite polynomials $H^{\tau_\rho}_\mu := H_\mu\circ\tau_\rho$.

%

\begin{remark}
Lemmas \ref{lem:tripleH} and \ref{lem:expH} are also valid with the transformed multivariate Hermite polynomials $H^{\tau_\rho}$.
In particular, $\kappa_{\xi_m, \nu_m, \mu_m}$ does not change under transformation and the expansion in Lemma~\ref{lem:expH} holds by substituting $t\in\bbR$ with $\sigma_m t$ in the corresponding dimension $m\in\bbN$.
\end{remark}

\subsection{Weak formulation in problem-dependent spaces}
\label{sec:weak-formulation}

In order to obtain a well-posed variational formulation of~\eqref{eq:model} on $L^2(\Gamma,\gamma;\mcX)$, we follow the approach in~\cite{MR2805155} and introduce a measure $\gamma_\rho$ which is stronger than $\gamma$ and assume integrability of $f$ with respect to this measure.
For $\rho > 0$ and $0\leq \theta < 1$, let the bilinear form $B_{\theta\rho}:\mcV_{\theta\rho}\times\mcV_{\theta\rho}\to\bbR$ be given by
\begin{equation}
\label{eq:Btr}
B_{\theta\rho}(w,v) := \int_\Gamma\int_D a(x,y)\nabla w(x,y)\cdot\nabla v(x,y)\dx{x}\gamma_{\theta\rho}(\dx{y}).
\end{equation}
The solution space is then defined as the Hilbert space
\begin{equation}
\mcV_{\theta\rho} := \mset{v:\Gamma\to\mcX\quad \mcB(\Gamma)\text{-measurable}}{B_{\theta\rho}(v,v)<\infty},
\end{equation}
endowed with the inner product $B_{\theta\rho}(\cdot,\cdot)$, the induced energy norm $\norm{v}_{\theta\rho}:=B_{\theta\rho}(v,v)^{1/2}$ for $v\in\mcV_{\theta\rho}$ and the respective dual pairing $\langle\cdot,\cdot\rangle_{\theta\rho}$ between $\mcV_{\theta\rho}^\ast$ and $\mcV_{\theta\rho}$.
The different employed spaces are related as follows.
\begin{lemma}[\cite{MR2805155} Prop. 2.43]
For $0<\theta<1$,
\begin{equation}
  \label{eq:embedding}
L^2(\Gamma,\gamma_\rho;\mcX) \subset \mcV_{\theta\rho} \subset L^2(\Gamma,\gamma;\mcX)
\end{equation}
are continuous embeddings.
\end{lemma}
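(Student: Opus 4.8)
The plan is to reduce both continuous embeddings to pointwise comparisons of the diffusion weight $a(x,y)$, which Lemma~\ref{lem:a-bounds} sandwiches between $\exp\big({-}\sum_\ell \beta_\ell\abs{y_\ell}\big)$ and $\exp\big(\sum_\ell \beta_\ell\abs{y_\ell}\big)$, against the Radon--Nikodym derivatives relating the measures $\gamma$, $\gamma_{\theta\rho}$ and $\gamma_\rho$. Starting from the definition~\eqref{eq:Btr} of $B_{\theta\rho}$ and Lemma~\ref{lem:a-bounds}, I would first record
\[
\int_\Gamma \ainf(y)\,\norm{v(y)}_\mcX^2\,\gamma_{\theta\rho}(\dx{y}) \le B_{\theta\rho}(v,v) \le \int_\Gamma \asup(y)\,\norm{v(y)}_\mcX^2\,\gamma_{\theta\rho}(\dx{y}),
\]
and then, writing $\dx{\gamma_{\theta\rho}}(y) = \zeta_{\theta\rho}(y)\dx{\gamma}(y)$ and $\dx{\gamma_\rho}(y) = \zeta_\rho(y)\dx{\gamma}(y)$, observe that the claim follows as soon as one has the two pointwise bounds
\[
\exp\Big(\sum_\ell \beta_\ell\abs{y_\ell}\Big)\,\zeta_{\theta\rho}(y) \le C_1\,\zeta_\rho(y) \qquad\text{and}\qquad \exp\Big({-}\sum_\ell \beta_\ell\abs{y_\ell}\Big)\,\zeta_{\theta\rho}(y) \ge C_2 > 0
\]
for $\gamma$-a.e.\ $y\in\Gamma$, with $0<C_1,C_2<\infty$ depending only on $\bfbeta$, $\rho$, $\theta$. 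Indeed, the first gives $\norm{v}_{\theta\rho}^2 \le C_1\norm{v}_{L^2(\Gamma,\gamma_\rho;\mcX)}^2$, hence the continuous embedding $L^2(\Gamma,\gamma_\rho;\mcX)\subset\mcV_{\theta\rho}$, and the second gives $\norm{v}_{L^2(\Gamma,\gamma;\mcX)}^2 \le C_2^{-1}\norm{v}_{\theta\rho}^2$, hence $\mcV_{\theta\rho}\subset L^2(\Gamma,\gamma;\mcX)$; the measurability requirement is automatic since elements of all three spaces are $\mcB(\Gamma)$-measurable into the separable space $\mcX$.

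Next I would prove these two pointwise bounds by exploiting that both sides factorise over the coordinates $m\in\bbN$. With $\sigma_m = \sigma_m(\rho) = e^{\rho\beta_m}$, a short computation gives for each $m$
\[
\exp(\beta_m\abs{y_m})\,\frac{\zeta_{\theta\rho,m}(y_m)}{\zeta_{\rho,m}(y_m)} = e^{(1-\theta)\rho\beta_m}\exp\!\Big(\beta_m\abs{y_m} - \tfrac12 c_m y_m^2\Big),\qquad c_m := e^{-2\theta\rho\beta_m} - e^{-2\rho\beta_m},
\]
\[
\exp({-}\beta_m\abs{y_m})\,\zeta_{\theta\rho,m}(y_m) = e^{-\theta\rho\beta_m}\exp\!\Big({-}\beta_m\abs{y_m} + \tfrac12\big(1 - e^{-2\theta\rho\beta_m}\big)y_m^2\Big),
\]
where $c_m>0$ and $1-e^{-2\theta\rho\beta_m}>0$ precisely because $0<\theta<1$ and $\rho>0$ (the coordinates with $\beta_m=0$ contribute a factor $1$). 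Maximising the quadratic $t\mapsto \beta_m t - \tfrac12 c_m t^2$ over $t\ge0$ bounds the first right-hand side by $e^{(1-\theta)\rho\beta_m}\exp\big(\beta_m^2/(2c_m)\big)$ uniformly in $y_m$, and minimising $t\mapsto -\beta_m t + \tfrac12(1-e^{-2\theta\rho\beta_m})t^2$ over $t\ge0$ bounds the second from below by $e^{-\theta\rho\beta_m}\exp\!\big({-}\beta_m^2/(2(1-e^{-2\theta\rho\beta_m}))\big)$; taking products over $m$ then yields the required $C_1$ and $C_2$.

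Finally I would verify convergence of those infinite products, which is where the actual work lies. Since $\bfbeta\in\ell^1(\bbN)$, the prefactors $e^{\pm(1-\theta)\rho\beta_m}$ and $e^{\pm\theta\rho\beta_m}$ multiply to $\exp\big(O(\norm{\bfbeta}_{\ell^1})\big)$, and using $\beta_m\to0$ together with the expansions $c_m = 2(1-\theta)\rho\beta_m + O(\beta_m^2)$ and $1 - e^{-2\theta\rho\beta_m} = 2\theta\rho\beta_m + O(\beta_m^2)$ one sees that $\beta_m^2/(2c_m)$ and $\beta_m^2/(2(1-e^{-2\theta\rho\beta_m}))$ are both $O(\beta_m)$, hence summable; this produces finite positive $C_1,C_2$ and closes the argument. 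The main obstacle is exactly this summability check: it is what forces $\rho>0$, and it fails at $\theta=0$ (there $\zeta_0\equiv1$, so $\exp\big({-}\sum_\ell\beta_\ell\abs{y_\ell}\big)$ has no positive lower bound and the lower embedding breaks), which is why the statement requires $0<\theta<1$.
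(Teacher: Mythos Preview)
The paper does not give its own proof of this lemma; it is stated with a citation to Proposition~2.43 of~\cite{MR2805155}. In fact, the embeddings follow at once from the boundedness and coercivity estimates recorded (also without proof) in Lemma~\ref{lem:elliptic}: \eqref{eq:Vtr-boundedness} yields $\norm{v}_{\theta\rho}^2 \le \hat c_{\theta\rho}\,\norm{v}_{L^2(\Gamma,\gamma_\rho;\mcX)}^2$, and \eqref{eq:Vtr-coercivity} yields $\norm{v}_{L^2(\Gamma,\gamma;\mcX)}^2 \le \check c_{\theta\rho}^{-1}\norm{v}_{\theta\rho}^2$.

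Your argument is correct and self-contained. What you actually do is re-derive the content of Lemma~\ref{lem:elliptic} from scratch: you sandwich $a$ via Lemma~\ref{lem:a-bounds}, factorise both the weight $\exp(\pm\sum_m\beta_m\abs{y_m})$ and the Radon--Nikodym densities $\zeta_{\theta\rho}$, $\zeta_\rho$ over coordinates, optimise the resulting one-dimensional quadratic exponentials, and then check that the $\ell^1$-assumption on $\bfbeta$ together with the Taylor expansions $c_m \sim 2(1-\theta)\rho\beta_m$ and $1-e^{-2\theta\rho\beta_m}\sim 2\theta\rho\beta_m$ make the coordinatewise log-constants summable. All steps are valid, and your closing remark on why $0<\theta<1$ is essential (the summability breaks at $\theta\in\{0,1\}$) is to the point. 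There is nothing to compare against in the present paper; your proof could serve as the missing justification for both this lemma and Lemma~\ref{lem:elliptic}.
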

It can be shown that the bilinear form $B_{\theta\rho}(\cdot,\cdot)$ is $\mcV_{\theta\rho}$-elliptic in the sense of the following Lemma.
\begin{lemma}[\cite{MR2805155} Lem. 2.41, 2.42]
\label{lem:elliptic}
For $w,v\in L^2(\Gamma,\gamma_\rho;\mcX)$,
\begin{equation}
\label{eq:Vtr-boundedness}
\abs{B_{\theta\rho}(w,v)} \leq \hat c_{\vartheta \rho} \norm{w}_{L^2(\Gamma,\gamma_\rho;\mcX)} \norm{v}_{L^2(\Gamma,\gamma_\rho;\mcX)}
\end{equation}
and for $v \in L^2(\Gamma,\gamma;\mcX)$,
\begin{equation}
\label{eq:Vtr-coercivity}
B_{\theta\rho}(v,v) \geq \check c_{\vartheta \rho} \norm{v}^2_{L^2(\Gamma,\gamma;\mcX)}.
\end{equation}
\end{lemma}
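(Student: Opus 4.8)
The plan is to reduce both estimates, inside the $\Gamma$-integral, to the pointwise-in-$y$ bounds on the coefficient provided by Lemma~\ref{lem:a-bounds}, and then to control the Gaussian weights explicitly by one-dimensional computations.

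\emph{Continuity.} Fixing $w,v\in L^2(\Gamma,\gammar;\mcX)$, I would first apply the Cauchy--Schwarz inequality on $D$ together with $a(x,y)\le\asup(y)$ to obtain, for every $y\in\Gamma$,
\begin{equation}
\Big\lvert\int_D a(x,y)\,\nabla w(x,y)\cdot\nabla v(x,y)\dx{x}\Big\rvert \le \asup(y)\,\norm{w(y)}_\mcX\,\norm{v(y)}_\mcX .
\end{equation}
Since $\gammatr$ and $\gammar$ are mutually absolutely continuous with $\dd\gammatr/\dd\gammar=\zeta_{\theta\rho}/\zeta_\rho$ (where $\zeta_{\theta\rho}$ is $\zeta_\rho$ with $\rho$ replaced by $\theta\rho$), I would then rewrite the $\Gamma$-integral against $\gammar$ and apply the Cauchy--Schwarz inequality in $L^2(\Gamma,\gammar)$, which gives
\begin{equation}
\abs{B_{\theta\rho}(w,v)} \le \Big(\esssup_{y\in\Gamma}\ \asup(y)\,\frac{\dd\gammatr}{\dd\gammar}(y)\Big)\,\norm{w}_{L^2(\Gamma,\gammar;\mcX)}\,\norm{v}_{L^2(\Gamma,\gammar;\mcX)}.
\end{equation}
It then remains only to show that the bracketed essential supremum is finite, and $\hat c_{\theta\rho}$ can be taken equal to it; in passing this re-establishes the embedding $L^2(\Gamma,\gammar;\mcX)\subset\mcV_{\theta\rho}$ of~\eqref{eq:embedding}.

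\emph{Coercivity.} For $v\in L^2(\Gamma,\gamma;\mcX)$ I would use the lower bound $a(x,y)\ge\ainf(y)>0$ and $\gammatr(\dx{y})=\zeta_{\theta\rho}(y)\,\gamma(\dx{y})$ to write
\begin{equation}
B_{\theta\rho}(v,v) \ge \int_\Gamma \ainf(y)\,\norm{v(y)}_\mcX^2\,\gammatr(\dx{y}) = \int_\Gamma \ainf(y)\,\zeta_{\theta\rho}(y)\,\norm{v(y)}_\mcX^2\,\gamma(\dx{y}),
\end{equation}
so the estimate follows once $\check c_{\theta\rho}:=\essinf_{y\in\Gamma}\ \ainf(y)\,\zeta_{\theta\rho}(y)$ is shown to be strictly positive.

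\emph{The two scalar estimates, and the main obstacle.} It remains to verify $\esssup_y\,\asup(y)\,(\dd\gammatr/\dd\gammar)(y)<\infty$ and $\essinf_y\,\ainf(y)\,\zeta_{\theta\rho}(y)>0$. Here I would insert the bounds $\asup(y)\le\exp(\sum_\ell\beta_\ell\abs{y_\ell})$, $\ainf(y)\ge\exp(-\sum_\ell\beta_\ell\abs{y_\ell})$ from Lemma~\ref{lem:a-bounds} and the explicit product forms of the densities, so that both quantities factorize over $m\in\bbN$ into one-dimensional factors of the shape $c_m\,\exp\!\big(\mp\tfrac12\kappa_m y_m^2\pm\beta_m\abs{y_m}\big)$, where $c_m$ is a ratio of the numbers $\sigma_m(\cdot)=\exp((\cdot)\beta_m)$ and $\kappa_m>0$ equals $1-\exp(-2\theta\rho\beta_m)$ in the coercivity case and $\exp(-2\theta\rho\beta_m)-\exp(-2\rho\beta_m)$ in the continuity case; positivity of $\kappa_m$ is exactly where the strict inequalities $0<\theta<1$ and $\rho>0$ enter. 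Optimizing the scalar quadratic-minus-linear exponent $t\mapsto\mp\tfrac12\kappa_m t^2\pm\beta_m\abs{t}$ over $t\in\bbR$ bounds each factor uniformly in $y_m$ by $c_m\exp(\pm\beta_m^2/(2\kappa_m))$, and I would then check convergence of the resulting infinite product of bounds — to a finite limit, resp.\ a strictly positive one — using that $\kappa_m$ is comparable to $\beta_m$ for large $m$, so that $\abs{\log c_m}$ and $\beta_m^2/\kappa_m$ are both summable because $\bfbeta\in\ell^1(\bbN)$, after isolating the finitely many indices with $\beta_m$ not small (for which the asymptotics $1-e^{-2\theta\rho\beta_m}\sim 2\theta\rho\beta_m$ fail and one uses plain finiteness and positivity of the individual factors). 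I expect this convergence bookkeeping — tracking the signs of the quadratic terms and handling the large-$\beta_m$ indices separately — to be the only genuine obstacle; everything else is a direct application of Cauchy--Schwarz together with Lemma~\ref{lem:a-bounds}.
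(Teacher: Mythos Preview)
The paper does not prove this lemma; it is quoted from~\cite{MR2805155} (Lemmas~2.41 and~2.42 there) without argument, so there is no in-paper proof to compare against. Your outline is a correct reconstruction of the standard proof: reduce to the pointwise bounds $\ainf(y)\le a(x,y)\le\asup(y)$ from Lemma~\ref{lem:a-bounds}, change measure via the explicit Radon--Nikodym densities $\zeta_{\theta\rho}$ and $\zeta_\rho$, and then verify the two scalar conditions $\esssup_y\asup(y)\,\zeta_{\theta\rho}(y)/\zeta_\rho(y)<\infty$ and $\essinf_y\ainf(y)\,\zeta_{\theta\rho}(y)>0$ by a one-dimensional optimization of a quadratic-minus-linear exponent in each coordinate, followed by a summability check on $\log$ of the resulting factors using $\bfbeta\in\ell^1(\bbN)$. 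Your identification of the quadratic coefficients $\kappa_m$ and the role of the strict inequalities $0<\theta<1$, $\rho>0$ in ensuring $\kappa_m>0$ is exactly right, as is the asymptotic $\kappa_m\sim c\,\beta_m$ that makes $\beta_m^2/\kappa_m$ summable. This is essentially the argument in the cited reference.
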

Moreover, we assume that $f$ is such that the linear form
\begin{equation}
F_{\theta\rho}(v) := \int_\Gamma\int_D f(x)v(x,y)\gamma_{\theta\rho}(\dx{y})
\end{equation}
is well-defined.
For $F_{\theta\rho}\in\mcV_{\theta\rho}^\ast$,~\eqref{eq:Vtr-boundedness} and~\eqref{eq:Vtr-coercivity} in particular lead to the unique solvability of the variational problem in $\mcV_{\theta\rho}$,
\begin{equation}
\label{eq:weak-Vtrs}
B_{\theta\rho}(u,v) = F_{\theta\rho}(v)\qquad\text{ for all } v\in\mcV_{\theta\rho},
\end{equation}
and $u\in\mcV_{\theta\rho}$ is the unique solution of~\eqref{eq:model}.

\subsection{Deterministic discretization}
\label{sec:deterministic-disc}

We discretise the deterministic space $\mcX$ by a conforming finite element space
$\mcX_p(\mcT) := \mathrm{span} \{\varphi_i\}_{i=1}^N \subset \mcX$ of degree $p$ on some simplicial regular mesh $\mcT$ of domain D with the set of faces $\mcS$ (i.e., edges for $d=2$) and basis functions $\varphi_i$.

In order to circumvent complications due to an inexact approximation of boundary values, we assume that $D$ is a polygon.
We denote by $P_p(\mcT)$ the space of piecewise polynomials of degree $p$ on the triangulation $\mcT$.
The assumed FE discretization with Lagrange elements of order $p$ then satisfies $\mcX_p(\mcT)\subset P_p(\mcT)\cap C(\overline{\mcT})$.
For any element $T\in\mcT$ and face $F\in\mcS$, we set the entity sizes $h_T := \diam T$ and $h_F := \diam F$.
Let $n_F$ denote the exterior unit normal on any face $F$.
The jump of some $\chi\in H^1(D;\bbR^d)$ on $F = \overline{T_1}\cap\overline{T_2}$ in normal direction $\jump{\chi}_F$ is then defined by
\begin{equation}
\jump{\chi}_F := \chi\vert_{T_1}\cdot n_F - \chi\vert_{T_2}\cdot n_F.
\end{equation}
By $\omega_T$ and $\omega_F$ we denote the element and facet patches defined by the union of all elements which share at least a vertex with $T$ or $F$, respectively.
Consequently, the Cl\'ement interpolation operator $I\colon \mcX \to \mcX_p(\mcT)$ satisfies
\begin{align}
\label{eq:clement}
  \begin{array}{rlr}
    \norm{v- I v}_{L^2(T)} &\leq c_{\mathcal T} h_T \abs{v}_{\mcX,\omega_T} &\text{for } T\in\mcT,\\
    \norm{v- I v}_{L^2(F)} &\leq c_{\mathcal S} h_F^{1/2} \abs{v}_{\mcX,\omega_F}& \text{for } F\in\mcS,
  \end{array}\\
\end{align}
where the seminorms $\abs{\;\cdot\;}_{\mcX,\omega_T}$ and $\abs{\;\cdot\;}_{\mcX,\omega_F}$ are the restrictions of $\norm{\;\cdot\;}_{\mcX}$ to $\omega_T$ and $\omega_F$, respectively.

The fully discrete approximation space with $\abs{\Lambda}<\infty$ is given by
\begin{equation}
\mcV_N := \mcV_N(\Lambda;\mcT,p) := \mset[\bigg]{v_N(x,y) = \sum_{\mu\in\Lambda} v_{N,\mu}(x)\Hthrh_\mu(y)}{ \ v_{N,\mu}\in \mcX_p(\mcT)}, 
\end{equation}
and it holds $\mcV_N(\Lambda;\mcT,p)\subset\mcV_{\theta\rho}$.
The Galerkin projection of $u$ is the unique $u_N\in\mcV_N(\Lambda;\mcT,p)$ which satisfies
\begin{equation}
\label{eq:Galerkin-uN}
B_{\theta\rho}(u_N,v_N) = F_{\theta\rho}(v_N)\qquad\text{for all } v_N\in\mcV_N(\Lambda;\mcT,p).
\end{equation}
We define a tensor product interpolation operator $\mcI\colon L^2(\Gamma,\gamma;\mcX) \to \mcV_N(\Lambda;\mcT,p)$
for $v = \sum_{\mu \in \mathcal F} v_\mu H_\mu \in L^2(\Gamma,\gamma;\mcX)$ by setting
\begin{equation}
\mcI v := \sum_{\mu \in \Lambda} (I v_\mu) H_\mu.
\end{equation}
For $v \in \mcV_{\vartheta \rho}(\Lambda)$ and all $T \in \mcT$, this yields the interpolation estimate
\begin{align}
\label{eq:tensor-clement}
\norm{(\id - \mcI)v}_{L^2(\Gamma,\gamma;L^2(T))} &= \left( \int_\Gamma \Bigl\| \sum_{\mu \in \Lambda} (\id - I)v_\mu H_\mu(y) \Bigr\|_{L^2(T)}^2 \dgam{y} \right)^{1/2} \\
&= \left( \sum_{\mu \in \Lambda} \norm{(\id - I)v_\mu}_{L^2(T)}^2 \right)^{1/2} \\
&\leq c_{\mathcal T} h_T \left( \int_\Gamma \Bigl| \sum_{\mu \in \Lambda} v_\mu H_\mu(y) \Bigr|_{\mcX,\omega_T}^2 \dgam{y} \right)^{1/2} \\
&= c_{\mathcal T} h_T \abs{v}_{\mcV_{\vartheta \rho},\omega_T}. \\
\intertext{Likewise, on the edges $F \in \mcS$ we derive}
\norm{v- \mcI v}_{L^2(\Gamma,\gamma;L^2(F))} &\leq c_{\mathcal S} h_F^{1/2} \abs{v}_{\mcV_{\vartheta \rho},\omega_F}.
\end{align}
Here,
\begin{align}
\abs{v}_{\mcV_{\vartheta \rho},\omega_T}^2 := \int_\Gamma \abs{v(y)}_{\mcX,\omega_T}^2 \dgam{y}, \\ \abs{v}_{\mcV_{\vartheta \rho},\omega_F}^2 := \int_\Gamma \abs{v(y)}_{\mcX,\omega_F}^2 \dgam{y}.
\end{align}

\begin{theorem}[\cite{MR2805155} Thm. 2.45]
If $f\in L^p(\Gamma,\gamma_\rho;\mcX^\ast)$ for $p>2$, the Galerkin projection $u_N\in\mcV_N$ satisfies
\begin{equation}
\norm{u-u_N}_{L^2(\Gamma,\gamma;\mcX)} \leq \frac{\hat c_{\vartheta \rho}}{\check c_{\vartheta \rho}} \inf_{v_N\in\mcV_N(\Lambda;\mcT,p)} \norm{u-v_N}_{L^2(\Gamma,\gamma_\rho;\mcX)}.
\end{equation}
\end{theorem}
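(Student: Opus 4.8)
The plan is a C\'ea-type best-approximation argument, with the one structural subtlety specific to the lognormal setting: by Lemma~\ref{lem:elliptic} the form $B_{\theta\rho}$ is coercive only with respect to the weaker norm of $L^2(\Gamma,\gamma;\mcX)$ but bounded only with respect to the stronger norm of $L^2(\Gamma,\gamma_\rho;\mcX)$. Since the elements of the finite-dimensional space $\mcV_N(\Lambda;\mcT,p)$ are finite linear combinations of products of functions in $\mcX_p(\mcT)$ with rescaled tensorized Hermite polynomials, we have $\mcV_N(\Lambda;\mcT,p)\subset L^2(\Gamma,\gamma_\rho;\mcX)$, so~\eqref{eq:Vtr-boundedness} is applicable to differences $u-v_N$ once it is known that $u\in L^2(\Gamma,\gamma_\rho;\mcX)$. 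I would split the proof accordingly into an abstract best-approximation step, valid unconditionally (with a possibly infinite right-hand side), and a regularity step showing that the hypothesis $f\in L^p(\Gamma,\gamma_\rho;\mcX^\ast)$ with $p>2$ forces $u\in L^2(\Gamma,\gamma_\rho;\mcX)$, so that the bound is non-vacuous.

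For the abstract step, subtracting~\eqref{eq:Galerkin-uN} from~\eqref{eq:weak-Vtrs} and using $\mcV_N(\Lambda;\mcT,p)\subset\mcV_{\theta\rho}$ yields the Galerkin orthogonality $B_{\theta\rho}(u-u_N,v_N)=0$ for all $v_N\in\mcV_N(\Lambda;\mcT,p)$. Since $B_{\theta\rho}(\cdot,\cdot)$ is by definition the inner product of the Hilbert space $\mcV_{\theta\rho}$, the function $u_N$ is the $B_{\theta\rho}$-orthogonal projection of $u$ onto the closed finite-dimensional subspace $\mcV_N(\Lambda;\mcT,p)$, hence the energy-norm best approximation, $\norm{u-u_N}_{\theta\rho}=\inf_{v_N\in\mcV_N(\Lambda;\mcT,p)}\norm{u-v_N}_{\theta\rho}$. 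Chaining coercivity~\eqref{eq:Vtr-coercivity} on the left with boundedness~\eqref{eq:Vtr-boundedness} on the right, i.e.
\[
\check c_{\theta\rho}\,\norm{u-u_N}^2_{L^2(\Gamma,\gamma;\mcX)}\le\norm{u-u_N}^2_{\theta\rho}\le\norm{u-v_N}^2_{\theta\rho}\le\hat c_{\theta\rho}\,\norm{u-v_N}^2_{L^2(\Gamma,\gamma_\rho;\mcX)}
\]
for any $v_N\in\mcV_N(\Lambda;\mcT,p)$, then taking square roots and the infimum over $v_N$ gives the claimed estimate (the argument in fact yields the constant $(\hat c_{\theta\rho}/\check c_{\theta\rho})^{1/2}$, hence a fortiori the stated $\hat c_{\theta\rho}/\check c_{\theta\rho}$). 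Note that the last inequality uses $u-v_N\in L^2(\Gamma,\gamma_\rho;\mcX)$; if $u\notin L^2(\Gamma,\gamma_\rho;\mcX)$ the right-hand side is $+\infty$ and nothing is asserted.

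It remains to show $u\in L^2(\Gamma,\gamma_\rho;\mcX)$. The pathwise bound~\eqref{eq:u-regularity} gives $\norm{u(y)}_\mcX\le\ainf(y)^{-1}\norm{f(y)}_{\mcX^\ast}$ for $\gamma$-a.e.\ $y$, so
\[
\norm{u}^2_{L^2(\Gamma,\gamma_\rho;\mcX)}\le\int_\Gamma\ainf(y)^{-2}\norm{f(y)}^2_{\mcX^\ast}\dx{\gamma_\rho}(y)\le\norm{f}^2_{L^p(\Gamma,\gamma_\rho;\mcX^\ast)}\Bigl(\int_\Gamma\ainf(y)^{-\frac{2p}{p-2}}\dx{\gamma_\rho}(y)\Bigr)^{\frac{p-2}{p}}
\]
by H\"older's inequality with exponents $p/2$ and $p/(p-2)$; the hypothesis $p>2$ is precisely what keeps the second exponent finite. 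The first factor is finite by assumption, and for the second one Lemma~\ref{lem:a-bounds} gives $\ainf(y)^{-1}\le\exp(\sum_{\ell}\beta_\ell\abs{y_\ell})$, so it suffices to establish the Fernique-type bound $\int_\Gamma\exp(c\sum_\ell\beta_\ell\abs{y_\ell})\dx{\gamma_\rho}(y)<\infty$ for every $c>0$. Under $\gamma_\rho$ the coordinates $y_\ell$ are independent with $y_\ell\sim\mcN(0,\exp(2\rho\beta_\ell))$ and uniformly bounded variances (since $\bfbeta\in\ell^1(\bbN)\subset\ell^\infty(\bbN)$), and a one-dimensional computation gives $\int_\bbR\exp(c\beta_\ell\abs{y_\ell})\dx{\gamma_{\sigma_\ell}}(y_\ell)=1+O(\beta_\ell)$; the convergence of $\sum_\ell\beta_\ell$ then makes the infinite product converge, whence $u\in L^2(\Gamma,\gamma_\rho;\mcX)$. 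The only genuinely non-routine point is this last estimate, together with the observation that $L^p$-integrability of $f$ against the stronger measure $\gamma_\rho$ with $p>2$ is exactly what the H\"older splitting requires (at $p=2$ one would instead need $\ainf^{-1}\in L^\infty(\Gamma,\gamma_\rho)$, which fails for genuinely unbounded lognormal fields); the remaining pieces are the standard C\'ea mechanics.
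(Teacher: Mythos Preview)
The paper does not supply its own proof of this theorem; it is quoted from \cite{MR2805155} (Theorem~2.45) without argument. Your proposal is correct and is the natural C\'ea-type argument for this two-norm setting: Galerkin orthogonality makes $u_N$ the $B_{\theta\rho}$-orthogonal projection, so $\norm{u-u_N}_{\theta\rho}\le\norm{u-v_N}_{\theta\rho}$; coercivity~\eqref{eq:Vtr-coercivity} controls the left-hand side from below in $L^2(\Gamma,\gamma;\mcX)$ and boundedness~\eqref{eq:Vtr-boundedness} controls the right-hand side from above in $L^2(\Gamma,\gamma_\rho;\mcX)$. Your regularity step---the pathwise bound~\eqref{eq:u-regularity}, H\"older with exponents $p/2$ and $p/(p-2)$, and the Fernique-type product estimate $\int_\Gamma\exp(c\sum_\ell\beta_\ell\abs{y_\ell})\dx{\gamma_\rho}(y)<\infty$ from $\bfbeta\in\ell^1$---is also correct and is exactly where the hypothesis $p>2$ enters. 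Your remark that this route in fact yields the sharper constant $(\hat c_{\theta\rho}/\check c_{\theta\rho})^{1/2}$ is valid; the stated constant $\hat c_{\theta\rho}/\check c_{\theta\rho}$ is what one gets from the alternative C\'ea route $B_{\theta\rho}(u-u_N,u-u_N)=B_{\theta\rho}(u-u_N,u-v_N)$ followed by boundedness, which leaves a factor $\norm{u-u_N}_{L^2(\Gamma,\gamma_\rho;\mcX)}$ that cannot be directly cancelled against $\norm{u-u_N}_{L^2(\Gamma,\gamma;\mcX)}$ without a further embedding constant.
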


\begin{remark}
It should be noted that the constant $\frac{\hat c_{\vartheta \rho}}{\check c_{\vartheta \rho}}$ tends to $\infty$ as $\rho\to \{0,\infty\}$ and for $\theta\to\{0,1\}$, see Remark 2.46 in \cite{MR2805155}.
This is expected, as the problem is ill-posed in these limits.
In order to obtain reasonable upper error bounds, the parameters have hence to be chosen judiciously.
A more detailed investigation of an optimal parameter choice is postponed to future research. 
\end{remark}

\section{Decomposition of the Operator}
\label{sec:Tensor Decomposition Formats}

In this section, we introduce the discretization of the operator in an appropriate tensor format.
For this, an efficient representation of the non-linear coefficient is derived.
We first introduce basic aspects of the employed Tensor Train (TT) format.

\subsection{The Tensor Train format}
\label{sec:tt format}

We only provide a brief overview of the notation used regarding the tensor train representation.
For further details, we refer the reader to~\cite{EPS,Bachmayr2016} and the references therein.

Any function $w_N \in \mathcal V_N(\Lambda;\mathcal T,p)$ can be written as
\begin{equation}
w_N = \sum_{i=0}^{N-1} \sum_{\mu \in \Lambda} W(i,\mu) \phi_i \Hthrh_\mu.
\end{equation}
Thus, the discretization space is isomorphic to the tensor space, namely
\begin{equation}
\mathcal V_N(\Lambda;\mathcal T,p) \simeq \mathbb R^{N \times d_1 \times \cdots \times d_M}.
\end{equation}
The tensor $W$ grows exponentially with the order $M$, which constitutes the so called \emph{curse of dimensionality}.
We employ a low-rank decomposition of the tensor for a dimension reduction.
In this paper, we adhere to the \emph{Tensor Train (TT)} format for tensor decomposition~\cite{Oseledets2009}.
This seems reasonable, as the components (of the operator and hence the solution) are of decreasing importance due to the decay of the coefficient functions $b_m$ and therefore we can expect decreasing ranks in the tensor train format. Nevertheless, other tensor formats are also feasible in principle.

The TT representation of a tensor $W \in \mathbb R^{N \times d_1 \times \cdots \times d_M}$ is given as
\begin{equation}
W(i,\mu_1,\ldots,\mu_M) = \sum_{k_1=1}^{r_1} \cdots \sum_{k_M=1}^{r_M} W_0(i,k_1) \prod_{m=1}^M W_m(k_m,\mu_m,k_{m+1}).
\end{equation}
For simplicity of notation, we set $r_0 = r_{M+1} = 1$. If all dimensions $r_m$ are minimal, then this is called the \emph{TT decomposition} of $W$ and $\mathrm{rank}_{\mathrm{TT}}(W) := r = (1,r_1,\ldots,r_M,1)$ is called the \emph{TT rank} of $W$. The TT decomposition always exists and it can be computed in polynomial time using the \emph{hierarchical SVD} (HSVD)~\cite{Holtz2012b}. A truncated HSVD yields a \emph{quasi-optimal} approximation in the Frobenius norm~\cite{Oseledets2009,Grasedyck2010,Hackbusch2012,Hackbusch2014}. Most algebraic operations can be performed efficiently in the TT format~\cite{Bachmayr2016}.

Once the function $w_N$ has a low-rank TT decomposition, it is advisable to obtain a similar representation for the Galerkin operator on $\mathcal V_N(\Lambda;\mathcal T,p)$ in order to allow for efficient tensor solvers. For the lognormal coefficient $a(x,y)$, this can only be done approximately.

Later, it will be useful to express the storage complexity of a tensor train.
We distinguish the degrees of freedom given by the tensor train representation and the full (uncompressed) degrees of freedom.
For a tensor $U\in\mathbb{R}^{q_0\times\ldots\times q_L}$ of TT-rank $r=(1, r_1,\ldots,r_{L},1)$, the dimension of the low-rank tensor manifold is given by
\begin{equation}
  \label{eq:ttdofs}
  \text{tt-dofs}(U) := \sum_{\ell=1}^{L-1} (r_\ell q_\ell r_{\ell+1} - r^2_{\ell+1}) + r_{L}q_L,
\end{equation}
while the dimension of the full tensor space and hence its representation is
\begin{equation}
\text{full-dofs}(U) := \prod_{\ell=0}^L q_{\ell}.
\end{equation} 

One can conclude from~\eqref{eq:ttdofs} that the complexity of tensor trains depend only linear on the dimension, i.e., we have to store 
\begin{equation}
 \mathcal{O}(L\hat q \hat{r}^2), \quad \hat r = \max\left\{r\right\} \quad \hat{q} = \max\left\{q_0, \ldots, q_L\right\}
\end{equation}
 entries instead of $\mathcal{O}(\hat{q}^L)$, which is much smaller for moderate TT-ranks $r$.

\subsection{TT representation of the non-linear coefficient}
\label{eq:coefficient representation}

We approximate the coefficient
\begin{equation}
\label{eq:coef-prod}
a(x,y) = \exp\left(\sum_{\ell=1}^\infty b_\ell(x)y_\ell\right) = \prod_{\ell=1}^\infty e^{b_\ell(x)y_\ell}
\end{equation} 
using the coefficient splitting algorithm described in~\cite{MPDiss}. 
This results in a discretized coefficient on a tensor set 
\begin{equation}
\Delta = \{(\nu_1,\ldots,\nu_L,0,\ldots) \in \mathcal F : \nu_\ell = 0,\ldots, q_\ell-1, \ \ell = 1,\ldots,L \}
\end{equation}
with TT-rank $s = (1, s_1,\ldots,s_L, 1)$.
Here, we exploit Lemma \ref{lem:expH}, i.e., the fact that every factor of~\eqref{eq:coef-prod} has a Hermite expansion of the form
\begin{equation}
\label{eq:coeffherm}
\exp(b_\ell(x)y_\ell) = \sum_{\nu_\ell=0}^\infty c_{\nu_\ell}^{(\ell)}(x) \Hthrh_{\nu_\ell}(y_\ell)
\end{equation}
with
\begin{equation}
c_{\nu_\ell}^{(\ell)}(x) = \frac{(b_\ell(x)\sigma_\ell(\vartheta\rho))^{\nu_\ell}}{\sqrt{\nu_\ell !}} \exp((b_\ell(x)\sigma_\ell(\vartheta\rho))^2/2).
\end{equation}

The procedure is as follows: First, we fix an adequate quadrature rule for solving the involved integrals by choosing quadrature points $\chi_q \in D$ and weights $w_q \in \mathbb R$ for $q = 1,\ldots,P_{\mathrm{quad}}$.
We begin the discretization at the right most side and define the correlation matrix
\begin{align}
C_L(\nu_L,\nu_L') &:= \sum_{q=1}^{P_{\mathrm{quad}}} c_{\nu_L}^{(L)}(\chi_q) c_{\nu_L'}^{(L)}(\chi_q) w_q\\
&\approx \int_D c_{\nu_L}^{(L)}(x) c_{\nu_L'}^{(L)}(x) \dx{x}
\end{align}
for $\nu_L,\nu_L' = 0,\ldots,q_L-1$. This means that we have truncated the expansion~\eqref{eq:coeffherm} according to the tensor set $\Delta$, which yields an approximation of the factors.
This matrix is symmetric and positive semidefinite and it therefore admits an eigenvalue decomposition
\begin{equation}
C_L(\nu_L,\nu_L') = \sum_{k_L=1}^{q_L} \lambda_{k_L} A_L(k_L,\nu_L) A_L(k_L,\nu_L').
\end{equation}
This yields reduced basis functions
\begin{equation}
\tilde c_{k_L}^{(L)}(\chi_q) := \sum_{\nu_L=0}^{q_L-1} A_L(k_L,\nu_L) c_{\nu_L}^{(L)}(\chi_q)
\end{equation}
for $k_L = 1,\ldots,s_L$ that we can store explicitly at the quadrature points of the integral. If we choose $s_L = q_L$ then this is just a transformation without any reduction.

We proceed successively for $\ell = L-1,\dots,1$ by defining correlation matrices
\begin{equation}
C_\ell(\nu_\ell,k_{\ell+1},\nu_L',k_{\ell+1}') := \sum_{q=1}^{P_{\mathrm{quad}}} c_{\nu_\ell}^{(\ell)}(\chi_q) \tilde c_{k_{\ell+1}}^{(\ell+1)}(\chi_q) c_{\nu_\ell'}^{(\ell)}(\chi_q) \tilde c_{k_{\ell+1}'}^{(\ell+1)}(\chi_q) w_q
\end{equation}
with eigenvalue decompositions
\begin{equation}
C_\ell(\nu_\ell,k_{\ell+1},\nu_L',k_{\ell+1}') = \sum_{k_\ell=1}^{q_\ell} \lambda_{k_\ell} A_\ell(k_\ell,\nu_\ell,k_{\ell+1}) A_\ell(k_\ell,\nu_\ell',k_{\ell+1}')
\end{equation}
and the resulting reduced basis functions at the quadrature points
\begin{equation}
\tilde c_{k_\ell}{(\ell)}(\chi_p) = \sum_{\nu_\ell=0}^{q_\ell-1} \sum_{k_{\ell+1}=1}^{s_\ell} A_\ell(k_\ell,\nu_\ell,k_{\ell+1}) c_{\nu_\ell}^{(\ell)}(\chi_q) \tilde c_{k_{\ell+1}}^{(\ell+1)}(\chi_q),
\end{equation}
see Algorithm~\ref{alg:coeff-splitting}.
\begin{algorithm}[t]
\SetKwInOut{Input}{input}\SetKwInOut{Output}{output}
\Input{Coefficient functions $c_{\nu_1}^{(1)},\ldots,c_{\nu_L}^{(L)}, \quad \nu_\ell = 0,\ldots,q_\ell-1; \quad \ell=1,\ldots,L$; \\
ranks $s_1,\ldots,s_L$; \\
quadrature rule $(\chi_q,w_q), q=1,\ldots,P_{\mathrm{quad}}$.}
\Output{TT Tensor components $A_1,\ldots,A_L$;}
\BlankLine
Set $s_{L+1} = 1, \tilde c_1^{(L+1)} \equiv 1$\;
\For{$\ell \leftarrow L$ \DownTo $1$}{
	Arrange correlation matrix $C_\ell$:
	\begin{align}
	C_{\ell}(\nu_\ell,k_{\ell+1},\nu_\ell',k_{\ell+1}') &:= \sum_{q=1}^{P_{\mathrm{quad}}} c_{\nu_\ell}^{(\ell)}(\chi_q) \tilde c_{k_{\ell+1}}^{(\ell+1)}(\chi_q) c_{\nu_{\ell}'}^{(\ell)}(\chi_q) \tilde c_{k_{\ell+1}'}^{(\ell+1)}(\chi_q) w_q; \hspace{30pt} \\[-.5cm]
	\intertext{Compute eigenvalue decomposition:}
	C_\ell(\nu_\ell,k_{\ell+1},\nu_\ell',k_{\ell+1}') &= \sum_{k_\ell = 1}^{s_\ell} \lambda_{k_\ell} A_\ell(k_\ell,\nu_\ell,k_{\ell+1}) A_\ell(k_\ell,\nu_\ell',k_{\ell+1}');
	\end{align}
	Set reduced basis functions:
	$$\tilde c_{k_\ell}^\ell(\chi_q) = \sum_{\nu_\ell=0}^{q_\ell - 1} \sum_{k_{\ell+1}=1}^{s_{\ell+1}} A_\ell(k_\ell,\nu_\ell,k_{\ell+1}) c_{\nu_\ell}^\ell(\chi_q) \tilde c_{k_{\ell+1}}^{\ell+1}(\chi_q);$$
}
\caption{Algorithm for coefficient splitting.}
\label{alg:coeff-splitting}
\end{algorithm}
This results in a first component 
\begin{equation}
a_0[k_1](\chi_q) := \tilde c_{k_1}^{(1)}(\chi_q)
\end{equation}
for $k_1 = 1,\ldots,s_1$.
Note that on the one hand, it is possible to evaluate this component at any point $x \in D$ by converting the reduced basis functions back into their original form by means of the tensor components $A_\ell$ and the coefficient functions $c_{\nu_\ell}^{(\ell)}$.
More specifically,
\begin{align}
a_0[k_1](x) &= \sum_{\nu_1=0}^{q_1} \sum_{k_2=1}^{s_2} A_1(k_1,\nu_1,k_2) c_{\nu_1}^{(1)}(x) \tilde c_{k_2}^{(2)}(x) \\
&= \ldots \\
&= \sum_{k_2=1}^{s_2} \cdots \sum_{k_L=1}^{s_L} \prod_{\ell=1}^L \biggl( \sum_{\nu_\ell=0}^{q_\ell-1} A_\ell(k_\ell,\nu_\ell,k_\ell) c_{\nu_\ell}^{(\ell)}(x) \biggr). 
\end{align}
On the other hand, each original coefficient function is approximated by the reduced basis representation
\begin{align}
c_{\nu_L}^{(L)}(x) &\approx \sum_{k_L=1}^{s_L} A_L(k_L,\nu_L) \tilde c_{k_L}^{(L)}(x), \\
c_{\nu_\ell}^{(\ell)}(x) \tilde c_{k_{\ell+1}}^{(\ell+1)} &\approx \sum_{k_\ell=1}^{s_\ell} A_\ell(k_\ell,\nu_\ell,k_{\ell+1}) \tilde c_{k_\ell}^{(\ell)} \qquad\text{for all } \ell = L-1,\ldots,1.
\end{align}
This approximation is exact if the ranks $s = (s_1,\ldots,s_L)$ are full.

By the described procedure we obtain an approximate discretization $a_{\Delta,s} \approx a$ in a TT-like format that is continuous in the first component and that has the decomposition
\begin{equation}
\label{eq:tt-coef}
a_{\Delta,s}(x,y) = \sum_{k_1=1}^{s_1} \cdots \sum_{k_L=1}^{s_L} a_0[k_1](x) \left( \sum_{\nu \in \Delta} \prod_{\ell=1}^L A_\ell(k_\ell,\nu_\ell,k_{\ell+1}) \Hthrh_{\nu_\ell}(y_\ell) \right),
\end{equation}
see Figure~\ref{fig:coeff-splitting}.

\begin{figure}[t]
\centering
\includegraphics[width=\textwidth]{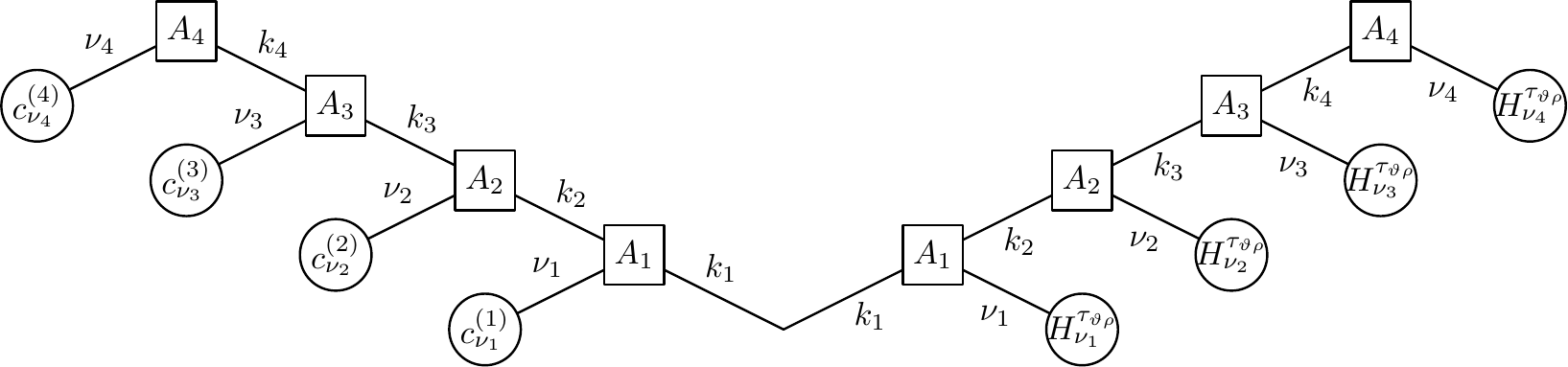}
\caption{A coefficient splitting for $L = 4$.}
\label{fig:coeff-splitting}
\end{figure}

On $\mathcal V_N(\Lambda;\mathcal T,p)$ the linear Galerkin operator $\mathbf A$ is in the TT matrix or matrix product operator (MPO) format:
\begin{equation}
\label{eq:operator}
\mathbf A(i,\mu,j,\mu') = \sum_{k_1=1}^{s_1} \cdots \sum_{k_M=1}^{s_M} \mathbf A_0(i,j,k_1) \prod_{m=1}^M \mathbf A_m(k_m,\mu_m,\mu_m',k_{m+1}),
\end{equation}
where
\begin{equation}
\mathbf A_0(i,j,k_1) = \int_D a_0[k_1](x) \nabla \phi_i \cdot \nabla \phi_j \dx{x}
\end{equation}
and for all $m = 1,\ldots,M-1$
\begin{align}
\mathbf A_m(k_m,\mu_m,\mu_m',k_{m+1}) &= \sum_{\nu_m=0}^{q_m-1} A_m(k_m,\nu_m,k_{m+1}) \\
& \qquad \qquad \times \int_{\mathbb R} \Hthrh_{\nu_m} \Hthrh_{\mu_m} \Hthrh_{\mu_m'} \dx{\gamma_{\theta\rho,m}}(y_m) \\
&= \sum_{\nu_m=0}^{q_m-1} A_m(k_m,\nu_m,k_{m+1}) \kappa_{\mu_m,\mu_m',\nu_m} \\
\end{align}
and
\begin{align}
\mathbf A_M(k_m,\mu_M,\mu_M') &= \sum_{\nu_M=0}^{q_M-1} \sum_{k_{m+1}=1}^{s_{m+1}} \cdots \sum_{k_L=1}^{s_L} A_M(k_m,\nu_M,k_{m+1}) \kappa_{\mu_M,\mu_M',\nu_M} \\
& \qquad \qquad \times \prod_{\ell=m+1}^L A_\ell(k_\ell,0,k_{\ell+1}) .
\end{align}
Since the integral over the triple product $\kappa_{\mu_m,\mu_m',\nu_m} = 0$ for all $\nu_m > 2\max(\mu_m,\mu_m')$, it is sufficient to set $q_\ell = 2d_\ell - 1$ for all $\ell = 1,\ldots,L$.
If the rank $s$ of the decomposition of the coefficient is full, then the discretised coefficient $a_{\Delta,s}$ is exact \emph{on the discrete space} $\mathcal V_N$ (up to quadrature errors).

However, this is generally infeasible as the rank would grow exponentially with $M$.
Therefore, a truncation of the rank becomes necessary and the coefficient is only an approximation.
We assume in the following that the error that is due to this approximation of the coefficient is small. A thorough estimation of this error is subject to future research.


\begin{remark}
A similar approach to decomposing the coefficient has been chosen in~\cite{Matthies2014} where the knowledge of the eigenfunctions of the covariance operator was assumed a priori.
This means that one has an orthogonal basis also for the deterministic part in $x$ and all that remains to do is to decompose the coefficient tensor for this basis representation.
This is also done using some quadrature and the $L^2$-error of this approximation can be estimated.
\end{remark}

According to the discretization of the coefficient, we introduce the splitting of the operator,
\begin{equation}
\label{eq:A-split}
\mathcal A(v) = \mathcal A_+(v) + \mathcal A_-(v),
\end{equation}
with the active and inactive operators $\mathcal A_+ : \mathcal V_N(\Lambda;\mathcal T,p) \rightarrow \mathcal V_N(\Lambda;\mathcal T,p)^*$ and $\mathcal A_- : \mathcal V_N(\Lambda;\mathcal T,p) \rightarrow \mathcal V_N(\Lambda;\mathcal T,p)^*$ defined by
\begin{align}
\label{eq:log-appr-ops}
\mathcal A_+(v) &:= - \ddiv (a_{\Delta,s} \nabla v), \\
\mathcal A_-(v) &:= - \ddiv \Bigl( (a - a_{\Delta,s}) \nabla v \Bigr),
\end{align}
for $v \in \mathcal V_N(\Lambda;\mathcal T,p)$.

The above considerations yield that the approximate operator $\mathcal A_+$ has the following tensor product structure
\begin{equation}
\label{eq:log-tens-op}
\mathcal A_+ = \sum_{k_1 =1}^{s_1} \cdots \sum_{k_\ell = 1}^{s_L} \mathcal A_0[k_1] \otimes \mathcal A_1[k_1,k_2] \otimes \cdots \otimes \mathcal A_L[k_L],
\end{equation}
with the operator components
\begin{align}
\mathcal A_0[k_1] : \mathcal X \rightarrow \mathcal X^*, \qquad \mathcal A_0[k_1](v_x) = - \ddiv (a_0[k_1] \nabla v_x),
\end{align}
and for all $\ell = 1,\ldots,L$,
\begin{align}
\mathcal A_\ell[k_\ell,k_{\ell + 1}] : L^2(\bbR,\gamma_{\vartheta \rho,m}) &\rightarrow L^2(\bbR,\gamma_{\vartheta \rho,m}) \\
\mathcal A_\ell[k_\ell,k_{\ell + 1}](v_y) &= \sum_{\nu_\ell=0}^{q_\ell-1} A_\ell(k_\ell,\nu_\ell,k_{\ell + 1}) \Hthrh_{\nu_\ell} v_y.
\end{align}

\section{Error Estimates}
\label{sec:Error Estimates}

This section is concerned with the derivation of a reliable a posteriori error estimator based on the stochastic residual.
In comparison to the derivation in~\cite{EGSZ,EGSZ2,EPS}, the lognormal coefficient requires a more involved approach directly related to the employed weighted function spaces introduced in Section~\ref{sec:Galerkin Discretization}.
In theory, an additional error occurs because of the discretization of the coefficient which we assume to be negligible.
The developed adaptive algorithm makes possible a computable a posteriori steering of the error components by a refinement of the FE mesh and the anisotropic Hermite polynomial chaos of the solution.
The efficient implementation is due to the formulation in the TT format the ranks of which are also set adaptively.

The definition of the operators as in~\eqref{eq:log-appr-ops} leads to a decomposition of the residual
\begin{equation}
\mathcal R(v) = \mathcal R_+(v) + \mathcal R_-(v),
\end{equation}
with
\begin{equation}
\mathcal R_+(v) := f - \mathcal A_+(v),\qquad
\mathcal R_-(v) := - \mathcal A_-(v).
\end{equation}
The discrete solution $w_N \in \mathcal V_N$ reads
\begin{equation}
w_N = \sum_{i=0}^{N-1} \sum_{\mu \in \Lambda} W(i,\mu) \varphi_i \Hthrh_\mu.
\end{equation}
We assume that the operator is given in its approximate semi-discrete form $\mathcal A_+$ and aim to estimate the energy error
\begin{equation}
\| u - w_N \|_{\mathcal A_+}^2 = \int_\Gamma \int_D a_{\Delta,s} \abs{\nabla (u - w_N)}^2 \, \dx{x} \dgamthrh{y}.
\end{equation}

\begin{remark}
As stated before, we assume that the error that results from approximating the coefficient is small. Estimation of this error is subject to future research. Work in this direction has e.g. been carried out in~\cite{MR2888311,Matthies2014}. 
Additionally, we require that the bounds~\eqref{eq:Vtr-boundedness} and~\eqref{eq:Vtr-coercivity} still hold, possibly with different constants $\hat c_{\vartheta \rho}^+$ and $\check c_{\vartheta \rho}^+$. 
This is for example guaranteed if $a_{\Delta,s}$ is positive, i.e., if 
\begin{equation}
a_{\Delta,s}(x,y) > 0 \qquad \forall x \in D, y \in \Gamma.
\end{equation}
Then, since the approximated coefficient is polynomial in $y$, the arguments in Lemma~\ref{lem:elliptic} yield the same constants
\begin{equation}
\hat c_{\vartheta \rho}^+ = \hat c_{\vartheta \rho}, \qquad \check c_{\vartheta \rho}^+ = \check c_{\vartheta \rho}.
\end{equation}
\end{remark}


We recall Theorem 5.1 from~\cite{EGSZ} and also provide the proof for the sake of a complete presentation.
Note that the result allows for non-orthogonal approximations $w_N\in\mcV_N$.
\begin{theorem}
\label{thm:mainError}
Let $\mcV_N\subset \mcV_{\vartheta\rho}$ a closed subspace and $w_N\in\mcV_N$, and let $u_N$ denote the $\mcA_+$ Galerkin projection of $u$ onto $\mcV_N$. 
Then it holds 
\begin{align}
\norm{u - w_N}_{\mcA_+}^2 &\leq \left(  \sup_{v \in \mcV_{\theta \rho} \setminus \{ 0 \}} \frac{\abs{ \langle \mathcal R_+(w_N), (\id - \mcI)v \rangle_{\theta \rho} }}{\check c_{\theta\rho}^+ \norm{v}_{L^2(\Gamma, \gamma;\mcX)}} + c_{\mathcal I} \norm{u_N - w_N}_{\mcA_+} \right)^2 \\
&\qquad \qquad \qquad + \norm{u_N - w_N}_{\mcA_+}^2.
\end{align}
\end{theorem}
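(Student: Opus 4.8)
The plan is to follow the proof of Theorem~5.1 in~\cite{EGSZ}, combining Galerkin orthogonality (which gives a Pythagorean identity) with the residual representation and the tensorized interpolation operator $\mcI$. Write $B_{\theta\rho}^+(w,v) := \int_\Gamma\int_D a_{\Delta,s}\,\nabla w\cdot\nabla v\,\dx{x}\,\dgamthrh{y}$ for the symmetric bilinear form inducing the energy norm, so that $\norm{v}_{\mcA_+}^2 = B_{\theta\rho}^+(v,v)$, and recall that in this section $u$ denotes the semi-discrete solution, i.e.\ $B_{\theta\rho}^+(u,v) = F_{\theta\rho}(v)$ for all $v\in\mcV_{\theta\rho}$ (the coefficient-discretization error being assumed negligible). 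Consequently, for every $v\in\mcV_{\theta\rho}$,
\[
\langle \mcR_+(w_N), v \rangle_{\theta\rho} \;=\; F_{\theta\rho}(v) - B_{\theta\rho}^+(w_N,v) \;=\; B_{\theta\rho}^+(u - w_N, v).
\]

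First I would separate the discrete part of the error. Since $u_N$ is the $\mcA_+$-Galerkin projection of $u$ onto $\mcV_N$ and $w_N\in\mcV_N$, we have $u_N - w_N\in\mcV_N$ and hence $B_{\theta\rho}^+(u-u_N, u_N-w_N) = 0$; expanding $u - w_N = (u-u_N) + (u_N-w_N)$ and using the symmetry of $B_{\theta\rho}^+$ yields the Pythagorean identity
\[
\norm{u - w_N}_{\mcA_+}^2 \;=\; \norm{u - u_N}_{\mcA_+}^2 + \norm{u_N - w_N}_{\mcA_+}^2 .
\]
It therefore suffices to show that $\norm{u - u_N}_{\mcA_+} \le \eta + c_{\mcI}\norm{u_N - w_N}_{\mcA_+}$, where $\eta$ abbreviates the supremum in the statement.

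For this I would test with the error itself: set $v := u - u_N \in \mcV_{\theta\rho}$. Since $\mcI v\in\mcV_N$, Galerkin orthogonality gives $\norm{u-u_N}_{\mcA_+}^2 = B_{\theta\rho}^+(u-u_N, (\id-\mcI)v)$; decomposing $u - u_N = (u - w_N) - (u_N - w_N)$ and using the residual identity above, this becomes
\[
\norm{u-u_N}_{\mcA_+}^2 \;=\; \langle \mcR_+(w_N), (\id-\mcI)v \rangle_{\theta\rho} \;-\; B_{\theta\rho}^+(u_N - w_N, (\id-\mcI)v).
\]
By the definition of $\eta$, the first summand is $\le \check c_{\theta\rho}^+\,\eta\,\norm{v}_{L^2(\Gamma,\gamma;\mcX)}$; the coercivity of Lemma~\ref{lem:elliptic}, which holds with the same constant $\check c_{\theta\rho}^+$ since $a_{\Delta,s}>0$, yields $\check c_{\theta\rho}^+\norm{v}_{L^2(\Gamma,\gamma;\mcX)}\le\norm{v}_{\mcA_+}$, so the first summand is $\le \eta\,\norm{u-u_N}_{\mcA_+}$. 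The second summand I would bound by the Cauchy--Schwarz inequality for $B_{\theta\rho}^+$ together with the stability of $\id-\mcI$ in the energy norm, $\norm{(\id-\mcI)v}_{\mcA_+}\le c_{\mcI}\norm{v}_{\mcA_+}$, obtaining $c_{\mcI}\norm{u_N-w_N}_{\mcA_+}\norm{u-u_N}_{\mcA_+}$. Dividing by $\norm{u-u_N}_{\mcA_+}$ (the case $u=u_N$ being trivial) gives the required bound, and inserting it into the Pythagorean identity proves the theorem.

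The one point that needs genuine work, and that I expect to be the main obstacle, is pinning down the constant $c_{\mcI}$, i.e.\ the energy-norm stability $\norm{(\id-\mcI)v}_{\mcA_+}\lesssim\norm{v}_{\mcA_+}$ (equivalently, the $\norm{\cdot}_{\mcA_+}$-boundedness of $\mcI$). Because $\mcI$ truncates the Hermite expansion and then applies a Cl\'ement-type quasi-interpolation in the physical variable only, its $H^1$-stability transfers directly to the \emph{unweighted} norm $\norm{\cdot}_{L^2(\Gamma,\gamma;\mcX)}$, whereas $\norm{\cdot}_{\mcA_+}$ carries the weight $a_{\Delta,s}$, which---although only a finite-degree polynomial in $y$---is unbounded; a mesh-independent $c_{\mcI}$ therefore requires exploiting that $a_{\Delta,s}$ is fixed and that its spatial contrast $\esssup_x a_{\Delta,s}(\cdot,y)/\essinf_x a_{\Delta,s}(\cdot,y)$ stays controlled. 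This constant is expected to be large, which is consistent with the bound being used only as a refinement indicator. A minor, cosmetic point is that writing $\check c_{\theta\rho}^+$ rather than $(\check c_{\theta\rho}^+)^{1/2}$ in the denominator of $\eta$ relies on $\check c_{\theta\rho}^+\le 1$, which holds in the parameter regime of interest.
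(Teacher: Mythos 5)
Your proof is correct and follows essentially the same route as the paper's: the Pythagorean identity from Galerkin orthogonality, insertion of $(\id-\mcI)v$ via $\mcI v\in\mcV_N$, reintroduction of $w_N$ at the cost of $c_{\mcI}\norm{u_N-w_N}_{\mcA_+}$, and coercivity to pass to the $L^2(\Gamma,\gamma;\mcX)$ norm; the only cosmetic difference is that you test with $v=u-u_N$ directly where the paper works with the dual (supremum) representation of $\norm{u-u_N}_{\mcA_+}$ and splits the residual $\mcR_+(u_N)=\mcR_+(w_N)+(\mcR_+(u_N)-\mcR_+(w_N))$ instead of splitting $u-u_N$. Your two side remarks (the $\check c_{\theta\rho}^+$ versus $(\check c_{\theta\rho}^+)^{1/2}$ issue and the unverified finiteness of $c_{\mcI}$ in the weighted energy norm) are both apt; the paper likewise leaves $c_{\mcI}$ as a defined operator norm without establishing a bound.
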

Here, $\mathcal I$ denotes the Cl\'ement interpolation operator in~\eqref{eq:tensor-clement} and $c_{\mathcal I}$ is the operator norm of $\id - \mathcal I$ with respect to the energy norm $\norm{ \cdot }_{\mcA_+}$.
The constant $\check c_{\theta\rho}^+$ is derived from the assumed coercivity of the bilinear form induced by $\mcA_+$ similar to~\eqref{eq:Vtr-boundedness} and~\eqref{eq:Vtr-coercivity}. 

\begin{proof}
Due to Galerkin orthogonality of $u_N$, it holds
\begin{equation}
\norm{u-w_N}_{\mcA_+}^2 = \norm{u-u_N}_{\mcA_+}^2 + \norm{u_N-w_N}_{\mcA_+}^2.
\end{equation}
By the Riesz representation theorem, the first part is
\begin{equation}
\norm{u - u_N}_{\mcA_+} = \sup_{v \in \mcV_{\theta \rho} \setminus \{ 0 \}} \frac{\abs{\langle \mcR_+(u_N),v\rangle_{\theta \rho}}}{\| v \|_{\mcA_+}}.
\end{equation}
We now utilise the Galerkin orthogonality and introduce the bounded linear map $\mcI :  \mcV_{\theta \rho} \rightarrow \mcV_N$ to obtain
\begin{equation}
\norm{u - u_N}_{\mcA_+} = \sup_{v \in \mcV_{\theta \rho} \setminus \{ 0 \}} \frac{\abs{\langle \mcR_+(u_N), (\id - \mcI)v \rangle_{\theta \rho}}}{\| v \|_{\mcA_+}}.
\end{equation}
Since we do not have access to the Galerkin solution $u_N$, we reintroduce $w_N$
\begin{align}
\norm{u - u_N}_{\mcA_+} &\leq \sup_{v \in \mcV_{\theta \rho} \setminus \{ 0 \}} \frac{\abs{\langle \mcR_+(w_N), (\id - \mathcal I) v \rangle_{\theta \rho}}}{\| v \|_{\mcA_+}} \\
& \qquad \qquad + \frac{\abs{\langle \mcR_+(u_N) - \mcR_+(w_N), (\id - \mathcal I) v \rangle_{\theta \rho}}}{\| v \|_{\mcA_+}} \\
&\leq \sup_{v \in \mcV_{\theta \rho} \setminus \{ 0 \}} \frac{\abs{\langle \mcR_+(w_N), (\id - \mathcal I) v \rangle_{\theta \rho}}}{\| v \|_{\mcA_+}} \\
& \qquad \qquad + \frac{\norm{u_N - w_N}_{\mcA_+} \norm{(\id - \mathcal I) v}_{\mcA_+}}{\| v \|_{\mcA_+}} \\
&\leq \sup_{v \in \mcV_{\theta \rho} \setminus \{ 0 \}} \frac{\abs{\langle \mcR_+(w_N), (\id - \mathcal I) v \rangle_{\theta \rho}}}{\| v \|_{\mcA_+}} + c_{\mathcal I} \| w_N - u_N \|_{\mathcal A_+}.
\end{align}
We apply the coercivity of the operator $\mcA_+$ to the denominator, which yields the desired result.
For the last inequality, we used the boundedness of $\mathcal I$ in the energy norm by defining the constant as the operator norm
\begin{equation}
c_{\mathcal I} := \sup_{v \in \mcV_{\theta \rho} \setminus \{ 0 \}} \frac{\|(\id - \mathcal I) v\|_{\mcA_+}}{\| v \|_{\mcA_+}}.
\end{equation}
\end{proof}

Since the product of the Hermite polynomials for each $m = 1,\ldots,M$ has degree at most $q_m + d_m -2$, it is useful to define the index set
\begin{align}
 \Xi := \Delta + \Lambda &:= \bigl\{ \eta = (\eta_1,\ldots,\eta_L,0,\ldots) \colon \\
& \qquad \qquad \eta_m = 0,\ldots,q_m+d_m-2, \; m = 1,\ldots,M; \\
& \qquad \qquad \eta_\ell = 0,\ldots,q_\ell-1, \; \ell = M+1,\ldots,L \bigr\}.
\end{align}
Then, the residual can be split into an active and an inactive part by using the tensor sets $\Xi$ and $\Lambda$,
\begin{align}
\mathcal R_+(w_N) &= f - \mathcal A_+(w_N) \\
&= f + \sum_{\eta \in \Xi} \ddiv \res(\cdot,\eta) \Hthrh_\eta \\
&= \mathcal R_{+,\Lambda}(w_N) + \mathcal R_{+,\Xi \setminus \Lambda}(w_N),
\intertext{with}
\mathcal R_{+,\Lambda}(w_N) &= f + \sum_{\eta \in \Lambda} \ddiv \res(\cdot,\eta) \Hthrh_\eta, \\
\mathcal R_{+,\Xi \setminus \Lambda}(w_N) &= \sum_{\eta \in \Xi \setminus \Lambda} \ddiv \res(\cdot,\eta) \Hthrh_\eta,
\end{align}
where $\ddiv \res(\cdot,\eta) \in \mathcal X^*$ for all $\eta \in \Xi$.

For all $\eta \in \Xi$, the function $\res$ is given as
\begin{align}
\res(x,\eta) &= \sum_{k_1=1}^{r_1} \cdots \sum_{k_M=1}^{r_M} \sum_{k_1'=1}^{s_1} \cdots \sum_{k_L'=1}^{s_L} \res_0[k_1,k_1'](x) \\
&\quad \times \left( \prod_{m=1}^{M} R_m(k_m,k_m',\eta_m,k_{m+1},k_{m+1}') \prod_{\ell=M+1}^{L} A_\ell(k_\ell',\eta_\ell,k_{\ell+1}') \right)
\end{align}
with continuous first component 
\begin{equation}
\res_0[k_1,k_1'](x) = \sum_{i=0}^{N-1} a_0[k_1'](x) W_0(i,k_1) \nabla \varphi_i(x)
\end{equation}
and stochastic components for $m = 1,\ldots,M$,
\begin{align}
R_m(k_m,&k_m',\eta_m,k_{m+1},k_{m+1}') \\
&= \sum_{\mu_m = 0}^{q_m-1} \sum_{\nu_m = 0}^{d_m-1} A(k_m',\nu_m,k_{m+1}') W_m(k_m,\mu_m,k_{m+1}) \kappa_{\mu_m,\nu_m,\eta_m}.
\end{align}
The function $\res$ is again a TT tensor with continuous first component with TT ranks $r_ms_m$ for $m=1,\ldots,M$ and $s_\ell$ for $\ell = M+1,\ldots,L$. The physical dimensions are $d_m + q_m - 2$ for all $m = 1,\ldots,M$ and $d_\ell -1$ for $\ell = M+1,\ldots,L$.

The above considerations suggest that the error can be decomposed into errors that derive from the respective approximations in the deterministic domain, the parametric domain and in the ranks. This is indeed the case, as we will see in the following. In a nutshell, if $u_N$ is the Galerkin solution in $\mcV_N$ and $u_\Lambda$ is the Galerkin solution in the semi-discrete space $\mcV(\Lambda)$, then the \emph{deterministic error} $\err\Det = \norm{u_\Lambda - u_N}_{\mcA_+}$ corresponds to the error of the \emph{active residual} $\mathcal R_{+,\Lambda}$, the \emph{parametric error} $\err\param= \norm{u - u_\Lambda}_{\mcA_+}$ corresponds to the \emph{inactive residual} $\mathcal R_{+,\Xi \setminus \Lambda}$ and the error made by restricting the ranks is the error in the discrete space $\err\disc(w_N) = \norm{u_N - w_N}_{\mcA_+}^2$, see Figure~\ref{fig:errors} for an illustration.

\begin{figure}
\centering
\includegraphics{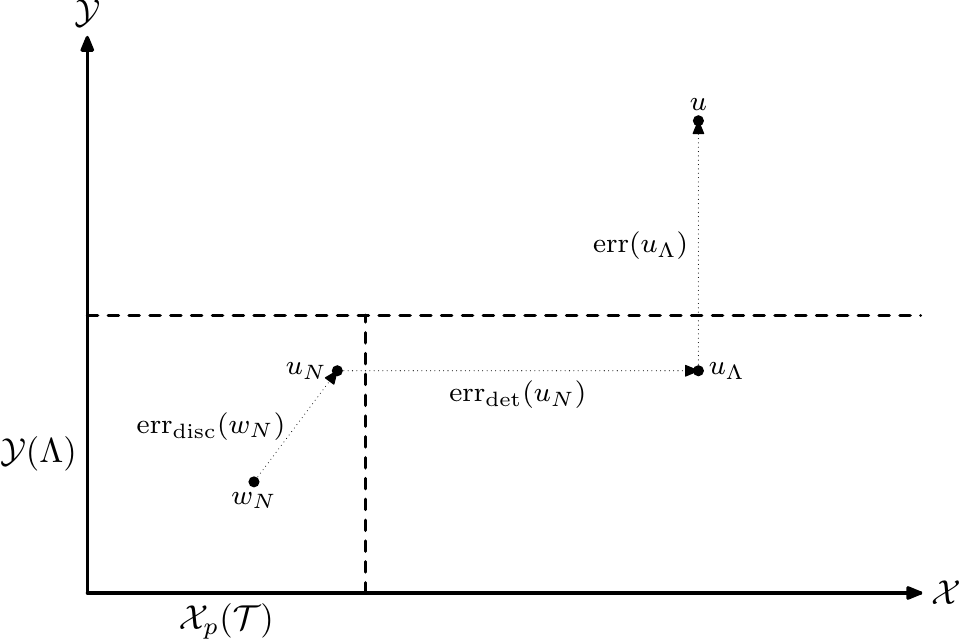}
\caption{An illustration of the different errors.}
\label{fig:errors}
\end{figure}

\subsection{Deterministic error estimation}
\label{sec:deterministic estimator}

We define the {\em deterministic error estimator}
\begin{align}
\est\Det(w_N) &:= \left( \sum_{T \in \mathcal T} \est_{\mathrm{det},T}^2(w_N) + \sum_{F \in \mathcal S} \est_{\mathrm{det},F}^2(w_N) \right)^{1/2}, \\
\est_{\mathrm{det},T}(w_N) &:= h_T \| \mathcal R_{+, \Lambda}(w_N) \zeta_{\vartheta \rho} \|_{L^2(\Gamma,\gamma;L^2(T))}, \\
\est_{\mathrm{det},F}(w_N) &:= h_F^{1/2} \| [\![ \mathcal R_{+, \Lambda}(w_N)]\!] \zeta_{\vartheta \rho} \|_{L^2(\Gamma,\gamma;L^2(F))}.
\end{align}
This estimates the active residual as follows.
\begin{proposition}
For any $v \in \mcV_{\vartheta\rho}$ and any $w_N \in \mcV_N$, it holds
\begin{equation}
\frac{\abs{\langle \mathcal R_{+,\Lambda}(w_N), (\id - \mcI)v \rangle_{\theta \rho}}}{\norm{v}_{L^2(\Gamma, \gamma;\mcX)}} \leq c\Det \est\Det(w_N).
\end{equation}
\end{proposition}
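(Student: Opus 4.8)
The plan is to estimate the duality pairing $\langle \mathcal R_{+,\Lambda}(w_N), (\id - \mcI)v\rangle_{\theta\rho}$ by first rewriting it as an integral over $\Gamma\times D$ against the weight $\zeta_{\vartheta\rho}$ (so that integration against $\gamma_{\theta\rho}$ becomes integration against $\gamma$), then performing the standard element-wise integration by parts that underlies every residual error estimator, and finally applying the tensorized Cl\'ement interpolation estimates \eqref{eq:tensor-clement} together with Cauchy--Schwarz. Since $\mathcal R_{+,\Lambda}(w_N) = f + \sum_{\eta\in\Lambda}\ddiv\res(\cdot,\eta)\Hthrh_\eta$ has a parametric expansion supported only on $\Lambda$ (the active indices), the weighting by $\zeta_{\vartheta\rho}$ can be absorbed cleanly and the pairing splits into volume contributions on each $T\in\mcT$ and jump contributions on each face $F\in\mcS$, exactly as in \cite{EGSZ,EPS}.

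Concretely, first I would write
\begin{align}
\langle \mathcal R_{+,\Lambda}(w_N), (\id - \mcI)v\rangle_{\theta\rho}
&= \int_\Gamma \int_D \Bigl( f\,(\id-\mcI)v + a_{\Delta,s}\nabla w_N\cdot\nabla(\id-\mcI)v \Bigr) \dx{x}\, \dgamthrh{y},
\end{align}
and note that by the definition of $\gamma_{\theta\rho}$ this equals the same integrand times $\zeta_{\vartheta\rho}(y)$ integrated against $\gamma$. Then, element-wise integration by parts on each $T\in\mcT$ moves the divergence off $a_{\Delta,s}\nabla w_N$, producing volume residual terms $\mathcal R_{+,\Lambda}(w_N)|_T$ tested against $(\id-\mcI)v$ on $T$ and face terms with the normal jump $\jump{\mathcal R_{+,\Lambda}(w_N)}_F$ tested against $(\id-\mcI)v$ on $F$ (the homogeneous Dirichlet boundary and the fact that $(\id-\mcI)v$ also vanishes on $\partial D$ kill the boundary faces). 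Applying Cauchy--Schwarz in $L^2(\Gamma,\gamma;L^2(T))$ and $L^2(\Gamma,\gamma;L^2(F))$ respectively, and then the interpolation bounds $\norm{(\id-\mcI)v}_{L^2(\Gamma,\gamma;L^2(T))}\le c_{\mathcal T} h_T \abs{v}_{\mcV_{\vartheta\rho},\omega_T}$ and the analogous facet bound, gives
\begin{align}
\abs{\langle \mathcal R_{+,\Lambda}(w_N), (\id - \mcI)v\rangle_{\theta\rho}}
&\le \sum_{T\in\mcT} h_T \norm{\mathcal R_{+,\Lambda}(w_N)\zeta_{\vartheta\rho}}_{L^2(\Gamma,\gamma;L^2(T))}\, c_{\mathcal T}\abs{v}_{\mcV_{\vartheta\rho},\omega_T} \\
&\quad + \sum_{F\in\mcS} h_F^{1/2} \norm{\jump{\mathcal R_{+,\Lambda}(w_N)}_F\zeta_{\vartheta\rho}}_{L^2(\Gamma,\gamma;L^2(F))}\, c_{\mathcal S}\abs{v}_{\mcV_{\vartheta\rho},\omega_F}.
\end{align}
A discrete Cauchy--Schwarz over the sums over $T$ and $F$, using finite overlap of the patches $\omega_T,\omega_F$ (a shape-regularity constant $C_{\mathrm{ovl}}$) to bound $\sum_T \abs{v}_{\mcV_{\vartheta\rho},\omega_T}^2 + \sum_F\abs{v}_{\mcV_{\vartheta\rho},\omega_F}^2 \lesssim \norm{v}_{L^2(\Gamma,\gamma;\mcX)}^2$, then yields the claimed bound with $c\Det$ collecting $c_{\mathcal T}$, $c_{\mathcal S}$ and $C_{\mathrm{ovl}}$.

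The one genuinely delicate point — and the step I expect to be the main obstacle — is the bookkeeping of the weight $\zeta_{\vartheta\rho}$: one must verify that placing $\zeta_{\vartheta\rho}$ inside the $L^2(\Gamma,\gamma;\cdot)$ norm of the residual (as in the definition of $\est_{\mathrm{det},T}$) is legitimate, i.e.\ that $\mathcal R_{+,\Lambda}(w_N)\zeta_{\vartheta\rho}$ is $\gamma$-square-integrable and that no extra constant is lost when passing from the $\gamma_{\theta\rho}$-pairing to the $\gamma$-pairing against the test seminorm $\abs{v}_{\mcV_{\vartheta\rho},\omega_T}$, which is itself a $\gamma$-weighted quantity. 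Because $\mathcal R_{+,\Lambda}(w_N)$ is a \emph{finite} polynomial expansion in $y$ over the index set $\Lambda$ and $f\in L^p(\Gamma,\gamma_\rho;\mcX^\ast)$ with $p>2$, the product with $\zeta_{\vartheta\rho}$ (whose tails decay like a Gaussian density ratio with $\vartheta<1$) stays in $L^2(\Gamma,\gamma)$ by a H\"older argument, so the estimator is finite; the passage between measures is exact by construction of $\gamma_{\theta\rho} = \zeta_{\vartheta\rho}\gamma$. Once this is settled the rest is the routine residual-estimator machinery, and I would organise the write-up as: (i) measure change and integration by parts, (ii) Cauchy--Schwarz plus tensor Cl\'ement, (iii) patch overlap summation.
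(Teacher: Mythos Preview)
Your proposal is correct and follows essentially the same route as the paper's proof: localize to elements, integrate by parts to produce volume and face residual terms (with the measure change $\dgamthrh{y}=\zeta_{\vartheta\rho}(y)\dgam{y}$ placing the weight onto the residual), apply Cauchy--Schwarz and the tensor Cl\'ement estimates \eqref{eq:tensor-clement}, and finish with finite patch overlap. The only slip is a sign in your first display (the $a_{\Delta,s}\nabla w_N\cdot\nabla(\id-\mcI)v$ term should carry a minus), which is immaterial since you take absolute values immediately afterward.
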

\begin{proof}
By localization to the elements of the triangulation $\mcT$ and integration by parts,
\begin{align}
\langle \mathcal R_{+,\Lambda}&(w_N), (\id - \mcI)v \rangle_{\theta \rho} \\ 
&= \int_\Gamma \sum_{T \in \mathcal T} \int_T f \bigl((\id - \mathcal I)v\bigr) - a_{\Delta,s} \nabla w_N \cdot \nabla \bigl((\id - \mathcal I)v\bigr) \, \dx{x} \dgamthrh{y} \\
&= \sum_{T \in \mathcal T} \int_\Gamma \int_T \mathcal R_{+,\Lambda}(w_N) \bigl((\id - \mathcal I)v\bigr) \, \zeta_{\vartheta \rho}(y) \dx{x} \dgam{y} \\
&\qquad + \sum_{F \in \mathcal S} \int_\Gamma \int_F [\![ \mathcal R_{+,\Lambda}(w_N) ]\!] \bigl((\id - \mathcal I)v\bigr) \zeta_{\vartheta \rho}(y) \dx{F}(x) \dgam{y}.
\end{align}
The Cauchy-Schwarz inequality yields 
\begin{align}
|\langle \mathcal R_{+,\Lambda}&(w_N), (\id - \mathcal I)v \rangle_{\theta\rho}| \\
&\leq \sum_{T \in \mathcal T} \| \mathcal R_{+,\Lambda}(w_N) \zeta_{\vartheta \rho} \|_{L^2(\Gamma,\gamma;L^2(T))} \|(\id - \mcI)v\|_{L^2(\Gamma,\gamma;L^2(T))} \\
&\qquad + \sum_{F \in \mathcal S} \| [\![ \mathcal R_{+,\Lambda}(w_N) ]\!] \zeta_{\vartheta \rho}\|_{L^2(\Gamma,\gamma;L^2(F))} \| (\id - \mcI)v \|_{L^2(\Gamma,\gamma;L^2(F))}.
\end{align}
With the interpolation properties~\eqref{eq:tensor-clement} we obtain
\begin{align}
|\langle \mathcal R_{+,\Lambda}&(w_N), (\id - \mathcal I)v \rangle_{\theta\rho} | \\
&\leq  \sum_{T \in \mathcal T} h_T c_{\mathcal T} \| \mathcal R_{+,\Lambda}(w_N) \zeta_{\vartheta \rho} \|_{L^2(\Gamma,\gamma;L^2(T))} | v |_{\mcV_{\vartheta\rho}, \omega_T} \\
&\qquad + \sum_{F \in \mathcal S} h_F^{1/2} c_{\mathcal{S}} \| [\![ \mathcal R_{+,\Lambda}(w_N) ]\!] \zeta_{\vartheta \rho} \|_{L^2(\Gamma,\gamma;L^2(F))} | v |_{\mcV_{\vartheta\rho}, \omega_F}.
\end{align}
Since the overlaps of the patches $\omega_T$ and $\omega_F$ are bounded uniformly, a Cauchy-Schwarz estimate leads to
\begin{equation}
|\langle \mathcal R_{+,\Lambda}(w_N), (\id - \mathcal I)v \rangle_{\theta\rho}| \leq c\Det \est\Det(w_N) \| v \|_{L^2(\Gamma, \gamma;\mcX)}.
\end{equation}
Here, the constant $c\Det$ depends on the properties of the interpolation operator~\eqref{eq:tensor-clement}.
\end{proof}

\begin{remark}
Note that an $L^2$-integration of the residual, which is an element of the dual space $\mcV_{\vartheta \rho}^*$, is possible since the solution consists of finite element functions.
These are piecewise polynomial and thus smooth on each element $T \in \mathcal T$. 
\end{remark}

\subsection{Tail error estimation}
\label{sec:tail-estimator}

The {\em parametric or tail estimator} is given by
\begin{equation}
\est\param(w_N) := \left( \int_\Gamma \int_D \Bigl( \sum_{\eta \in \Xi \setminus \Lambda} \res(x,\eta) \Hthrh_{\eta}(y) \, \zeta_{\vartheta\rho}(y) \Bigr)^2 \, \dx{x} \dgam{y} \right)^{1/2}
\end{equation}
and bounds the parametric error as follows.
\begin{proposition}
For any $v \in \mcV_{\vartheta\rho}$ and any $w_N \in \mcV_N$, it holds
\begin{equation}
\frac{\abs{\langle \mathcal R_{+,\Xi \setminus \Lambda}(w_N) , (\id - \mcI)v \rangle_{\theta \rho}}}{\norm{v}_{L^2(\Gamma, \gamma;\mcX)}} \leq \est\param(w_N).
\end{equation}
\end{proposition}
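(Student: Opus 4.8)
The plan is to bound the dual pairing $\langle \mathcal R_{+,\Xi \setminus \Lambda}(w_N), (\id - \mcI)v \rangle_{\theta\rho}$ directly, without localizing to the mesh (unlike the deterministic estimator), by exploiting that the inactive residual lives entirely in the orthogonal complement of the polynomial span indexed by $\Lambda$. First I would rewrite the duality pairing as an integral over $\Gamma \times D$ against the weight $\zeta_{\vartheta\rho}$, using the definition of $\langle\cdot,\cdot\rangle_{\theta\rho}$ and the representation $\mathcal R_{+,\Xi\setminus\Lambda}(w_N) = \sum_{\eta\in\Xi\setminus\Lambda}\ddiv\res(\cdot,\eta)\Hthrh_\eta$; integrating by parts (legitimate since $w_N$ is piecewise polynomial in $x$) moves the divergence onto the test function and produces $\int_\Gamma\int_D \sum_{\eta\in\Xi\setminus\Lambda}\res(x,\eta)\Hthrh_\eta(y)\cdot\nabla v(x,y)\,\zeta_{\vartheta\rho}(y)\dx{x}\dgam{y}$. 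The key observation is then that the interpolation operator $\mcI$ only projects onto the FE space in the $x$-variable and keeps the full multi-index set, so $(\id-\mcI)v$ still carries all Hermite modes; however, the crucial point is the converse — since $\Hthrh_\eta$ for $\eta\in\Xi\setminus\Lambda$ are $\gamma_{\theta\rho}$-orthogonal (after reweighting) to every $\Hthrh_\mu$ with $\mu\in\Lambda$, and $\mcI v$ has its stochastic content supported on $\Lambda$, the term $\langle\mathcal R_{+,\Xi\setminus\Lambda}(w_N),\mcI v\rangle_{\theta\rho}$ vanishes, so we may replace $(\id-\mcI)v$ by $v$ itself in the pairing.

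Next I would apply the Cauchy–Schwarz inequality on the product space $L^2(\Gamma,\gamma;L^2(D))$ to the integral
\[
\int_\Gamma\int_D \Bigl(\sum_{\eta\in\Xi\setminus\Lambda}\res(x,\eta)\Hthrh_\eta(y)\Bigr)\zeta_{\vartheta\rho}(y)\cdot\nabla v(x,y)\dx{x}\dgam{y},
\]
splitting the weight as $\zeta_{\vartheta\rho} = \zeta_{\vartheta\rho}$ attached to the residual factor (so that the first factor is exactly the integrand defining $\est\param$) and leaving $\nabla v$ as the second factor. This yields the bound $\est\param(w_N)\cdot\bigl(\int_\Gamma\int_D|\nabla v(x,y)|^2\dx{x}\dgam{y}\bigr)^{1/2} = \est\param(w_N)\,\norm{v}_{L^2(\Gamma,\gamma;\mcX)}$, which upon dividing by $\norm{v}_{L^2(\Gamma,\gamma;\mcX)}$ gives precisely the claimed inequality with constant one.

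The main obstacle, and the step requiring the most care, is justifying the orthogonality argument rigorously: one must check that the Hermite triple-product structure (Lemma~\ref{lem:tripleH}) together with the construction of $\Xi = \Delta+\Lambda$ guarantees that $\res(\cdot,\eta)$ for $\eta\in\Xi\setminus\Lambda$ has genuinely no overlap with the test-space modes after the $\zeta_{\vartheta\rho}$-reweighting is absorbed — i.e., that the relevant inner products $\int_\bbR \Hthrh_{\eta_m}\Hthrh_{\mu_m}\dx{\gamma_{\theta\rho,m}}$ are still diagonal, which holds because $\{\Hthrh_n\}$ is orthonormal in $L^2(\bbR,\gamma_{\theta\rho})$ by the defining transformation $\tau_{\theta\rho}$. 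A secondary technical point is that $\nabla(\id-\mcI)v$ and $\nabla v$ must be handled consistently when integrating by parts element-wise and reassembling, and that the weight $\zeta_{\vartheta\rho}$ does not spoil the $L^2$-integrability of the residual against $v$; both follow from the embeddings in~\eqref{eq:embedding} and the fact that $v\in\mcV_{\vartheta\rho}$.
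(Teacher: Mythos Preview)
Your proof is correct and essentially identical to the paper's: drop $\mcI v$ from the pairing by $\gamma_{\theta\rho}$-orthogonality of $\{\Hthrh_\eta\}_{\eta\in\Xi\setminus\Lambda}$ to $\mcV_N$, then apply Cauchy--Schwarz in $L^2(\Gamma,\gamma;L^2(D))$ with the weight $\zeta_{\vartheta\rho}$ attached to the residual factor so that the second factor is exactly $\norm{v}_{L^2(\Gamma,\gamma;\mcX)}$. One small slip worth noting: by the paper's definition $\mcI v = \sum_{\mu\in\Lambda}(Iv_\mu)\Hthrh_\mu$ \emph{does} restrict the stochastic index to $\Lambda$ (it does not ``keep the full multi-index set''), but you rely on precisely this fact in the next clause, so the argument is unaffected.
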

\begin{proof}
Recall that $\langle \mathcal R_{+,\Xi \setminus \Lambda}(w_N) , \mathcal I v \rangle_{\theta \rho} = 0$ since $\mathcal I v \in \mcV_N$.

Instead of factorizing out the $L^\infty$-norm of the diffusion coefficient as in~\cite{EGSZ,EGSZ2,EPS}, we use the Cauchy-Schwarz inequality to obtain
\begin{align}
\langle \mathcal R_{+, \Xi \setminus \Lambda}&(w_N), v \rangle_{\theta \rho} \\
&= \int_\Gamma \int_D \Bigl( \sum_{\eta \in \Xi \setminus \Lambda} \res(x,\eta) \Hthrh_\eta(y) \Bigr) \cdot \nabla v(x,y) \, \zeta_{\vartheta \rho}(y) \dx{x} \dgam{y} \\
&\leq \int_\Gamma \int_D \Bigl( \sum_{\eta \in \Xi \setminus \Lambda} \res(x,\eta) \Hthrh_\eta(y) \, \zeta_{\vartheta\rho}(y) \Bigr)^2 \dx{x} \dgam{y} \| v \|_{L^2(\Gamma, \gamma;\mcX)} \\
&= \est\param(w_N) \| v \|_{L^2(\Gamma, \gamma;\mcX)}.
\end{align}
\end{proof}

\subsection{Algebraic error estimation}
\label{sec:algebraic-estimator}

In order to define the {\em algebraic error estimator}, we need to state the linear basis change operator that translates integrals over two Hermite polynomials in the measure $\gamma_{\vartheta \rho}$ to the measure $\gamma$:
\begin{align}
\mathbf H_{\vartheta \rho \rightarrow 0} &: \mathbb R^{N \times d_1 \times \cdots \times d_M } \rightarrow \mathbb R^{N \times d_1 \times \cdots \times d_M}, \\
\mathbf H_{\vartheta \rho \rightarrow 0} &:= Z_0 \otimes Z_1 \otimes \cdots \otimes Z_M, \\
Z_0(i,j) &:= \int_D \nabla \varphi_i \cdot \nabla \varphi_j \dx{x}, \\
Z_m(\mu_m,\mu_m') &:= \int_{\bbR} \Hthrh_{\mu_m}(y_m) \Hthrh_{\mu_m'}(y_m) \, \dx{\gamma_m}(y_m) \quad \text{ for all } m=1,\ldots,M.
\end{align}
This yields the estimator
\begin{equation}
\label{eq:log-disc-est}
\est\disc(w_N) := \| (\mathbf A(W) - F) \mathbf H_{\vartheta \rho \rightarrow 0}^{-1/2} \|_{\ell^2(\mathbb R^{N \times d_1 \times \cdots \times d_M})}.
\end{equation}
\begin{proposition}
For any $w_N \in \mcV_N$ and the Galerkin solution $u_N \in \mcV_N$, it holds
\begin{equation}
\norm{u_N - w_N}_{\mcA_+} \leq (\check c_{\vartheta \rho}^+)^{-1} \est\disc(w_N).
\end{equation}
\end{proposition}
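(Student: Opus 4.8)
The plan is to rewrite the energy error $\norm{u_N - w_N}_{\mcA_+}$ in the coordinate language of the tensor discretisation and then reduce the claim to a single Cauchy--Schwarz inequality together with the assumed coercivity of $\mcA_+$. Write $e_N := u_N - w_N \in \mcV_N$ and let $\mathcal B_{\theta\rho}^+$ denote the bilinear form induced by $\mcA_+$, so that $\norm{v}_{\mcA_+}^2 = \mathcal B_{\theta\rho}^+(v,v)$. First I would use that the Galerkin solution $u_N$ solves the discrete $\mcA_+$-problem, $\mathcal B_{\theta\rho}^+(u_N,v_N) = F_{\theta\rho}(v_N)$ for all $v_N \in \mcV_N$; subtracting $\mathcal B_{\theta\rho}^+(w_N,v_N)$ and recalling $\mathcal R_+(w_N) = f - \mcA_+(w_N)$ gives
\begin{equation}
  \mathcal B_{\theta\rho}^+(e_N,v_N) = \langle \mathcal R_+(w_N), v_N\rangle_{\theta\rho}\qquad\text{for all } v_N\in\mcV_N,
\end{equation}
and the choice $v_N = e_N$ yields the Galerkin-type identity $\norm{e_N}_{\mcA_+}^2 = \langle \mathcal R_+(w_N), e_N\rangle_{\theta\rho}$.

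Next I would pass to coordinates. Expanding $e_N = \sum_{i,\mu} E(i,\mu)\,\varphi_i\Hthrh_\mu$ and using that, by the definitions of the operator tensor $\mathbf A$ and the load tensor $F$, one has $\langle \mathcal R_+(w_N),\varphi_i\Hthrh_\mu\rangle_{\theta\rho} = F(i,\mu) - (\mathbf A(W))(i,\mu)$, the identity above becomes $\norm{e_N}_{\mcA_+}^2 = -\langle E,\, \mathbf A(W)-F\rangle_{\ell^2}$. Since $\mathbf H_{\vartheta\rho\rightarrow 0}$ is symmetric and positive definite, I may insert $\mathbf H_{\vartheta\rho\rightarrow 0}^{1/2}\mathbf H_{\vartheta\rho\rightarrow 0}^{-1/2} = \id$ and apply the Cauchy--Schwarz inequality in $\ell^2$ to obtain
\begin{equation}
  \norm{e_N}_{\mcA_+}^2 \leq \norm{\mathbf H_{\vartheta\rho\rightarrow 0}^{1/2} E}_{\ell^2}\;\norm{(\mathbf A(W)-F)\,\mathbf H_{\vartheta\rho\rightarrow 0}^{-1/2}}_{\ell^2} = \norm{\mathbf H_{\vartheta\rho\rightarrow 0}^{1/2} E}_{\ell^2}\;\est\disc(w_N).
\end{equation}
The key observation is that $\mathbf H_{\vartheta\rho\rightarrow 0}$ is precisely the Gram matrix of the $L^2(\Gamma,\gamma;\mcX)$ inner product on $\mcV_N$ in the basis $\{\varphi_i\Hthrh_\mu\}$: since $Z_0$ is the stiffness matrix and $\prod_m Z_m(\mu_m,\mu_m') = \int_\Gamma \Hthrh_\mu\Hthrh_{\mu'}\dgam{y}$, a direct computation gives $\langle E,\mathbf H_{\vartheta\rho\rightarrow 0} E\rangle_{\ell^2} = \int_\Gamma\int_D\abs{\nabla e_N(x,y)}^2\dx{x}\dgam{y} = \norm{e_N}^2_{L^2(\Gamma,\gamma;\mcX)}$, whence $\norm{\mathbf H_{\vartheta\rho\rightarrow 0}^{1/2}E}_{\ell^2} = \norm{e_N}_{L^2(\Gamma,\gamma;\mcX)}$.

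Finally I would invoke the coercivity of $\mcA_+$ postulated in the remark after Theorem~\ref{thm:mainError}, namely $\check c_{\vartheta\rho}^+\,\norm{e_N}^2_{L^2(\Gamma,\gamma;\mcX)} \leq \norm{e_N}_{\mcA_+}^2$, which bounds $\norm{e_N}_{L^2(\Gamma,\gamma;\mcX)} \leq (\check c_{\vartheta\rho}^+)^{-1/2}\norm{e_N}_{\mcA_+}$. Substituting this into the previous display and dividing by $\norm{e_N}_{\mcA_+}$ (the case $e_N = 0$ being trivial) yields $\norm{e_N}_{\mcA_+}\leq (\check c_{\vartheta\rho}^+)^{-1/2}\est\disc(w_N)$, and since the coercivity constant is at most one in the regimes of interest this is dominated by $(\check c_{\vartheta\rho}^+)^{-1}\est\disc(w_N)$, which gives the stated bound. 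I do not expect a genuine analytic obstacle here; the one step that needs care is the algebraic bookkeeping, namely the identification of $\mathbf A(W)-F$ with the $\ell^2$-representative of the discrete residual functional $\langle\mathcal R_+(w_N),\cdot\rangle_{\theta\rho}$ on $\mcV_N$ and of $\mathbf H_{\vartheta\rho\rightarrow 0}$ with the corresponding $L^2(\Gamma,\gamma;\mcX)$-Gram matrix, after which everything collapses to one Cauchy--Schwarz estimate and the coercivity bound.
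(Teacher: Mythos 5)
Your proposal is correct and follows essentially the same route as the paper: both identify $\mathbf H_{\vartheta\rho\rightarrow 0}$ as the Gram matrix of the $L^2(\Gamma,\gamma;\mcX)$ inner product on $\mcV_N$, write the energy error as the $\ell^2$ pairing of $\mathbf A(W)-F$ with the coordinate error, insert $\mathbf H_{\vartheta\rho\rightarrow 0}^{\pm 1/2}$, apply Cauchy--Schwarz, and close with the coercivity of $\mcA_+$. Your remark about obtaining $(\check c_{\vartheta\rho}^+)^{-1/2}$ rather than $(\check c_{\vartheta\rho}^+)^{-1}$ is apt --- the paper's own final inequality silently makes the same replacement, which is only a weakening when $\check c_{\vartheta\rho}^+\leq 1$.
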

\begin{proof}
For $v_N = \sum_{i=0}^{N-1} \sum_{\mu \in \Lambda} V(i,\mu) \varphi_i \Hthrh_\mu \in \mathcal V_N$, it holds
\begin{equation}
\int_\Gamma \int_D \nabla v_N \cdot \nabla v_N \, \dx{x} \dgam{y} = \langle V \mathbf H_{\vartheta \rho \rightarrow 0}, V \rangle = \| V \mathbf H_{\vartheta \rho \rightarrow 0}^{1/2} \|_{\ell^2(\mathbb R^{N \times d_1 \times \cdots \times d_M})}.
\end{equation}
With this and using the coercivity of $\mcA_+$, we can see that
\begin{align}
\| w_N - u_N \|_{\mcA_+}^2 &= \int_\Gamma \int_D \mathcal A_+(w_N - u_N) \cdot \nabla (w_N - u_N) \, \dx{x} \dgamthrh{y} \\
&= \langle \mathbf AW - F , W - U \rangle \\
&= \langle (\mathbf AW - F) \mathbf H_{\vartheta \rho \rightarrow 0}^{-1/2} , (W - U) \mathbf H_{\vartheta \rho \rightarrow 0}^{1/2} \rangle \\
&\leq \|(\mathbf AW - F) \mathbf H_{\vartheta \rho \rightarrow 0}^{-1/2}\|_{\ell^2(\mathbb R^{N \times d_1 \times \cdots \times d_M})} \| w_N - u_N \|_{L^2(\Gamma,\gamma;\mathcal X)} \\
&\leq (\check c_{\vartheta \rho}^+)^{-1} \|(\mathbf AW - F) \mathbf H_{\vartheta \rho \rightarrow 0}^{-1/2}\|_{\ell^2(\mathbb R^{N \times d_1 \times \cdots \times d_M})} \| w_N - u_N \|_{\mathcal A_+}
\end{align}
and thus
\begin{equation}
\| w_N - u_N \|_{\mathcal A_+} \leq (\check c_{\vartheta \rho}^+)^{-1} \est\disc(w_N).
\end{equation}
\end{proof}

\subsection{Overall error estimation}
\label{sec:overall-estimator}

A combination of the above estimates yields an overall error estimator.
\begin{corollary}
\label{cor:full_error}
For any $w_N \in \mcV_N$, the energy error can be bounded by
\begin{align}
\norm{u - w_N}_{\mcA_+}^2 &\leq (\check c_{\vartheta \rho}^+)^{-2} \est_{\mathrm{all}}(w_N)^2
\intertext{with the error estimator given by}
\est_{\mathrm{all}}(w_N)^2 := \Bigl( c\Det \est\Det(w_N) &+ \est\param(w_N) + c_{\mcI} \est\disc(w_N) \Bigr)^2 \\ 
&+ \est\disc(w_N)^2.
\end{align}
\end{corollary}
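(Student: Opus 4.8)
The plan is to assemble Corollary~\ref{cor:full_error} from Theorem~\ref{thm:mainError} together with the three propositions of Sections~\ref{sec:deterministic estimator}--\ref{sec:algebraic-estimator}; no new analytical ingredient is required, so the argument is essentially a bookkeeping assembly. I would start from the abstract bound of Theorem~\ref{thm:mainError},
\begin{equation*}
\norm{u - w_N}_{\mcA_+}^2 \leq \left( \sup_{v \in \mcV_{\vartheta\rho}\setminus\{0\}} \frac{\abs{\langle \mcR_+(w_N), (\id-\mcI)v\rangle_{\vartheta\rho}}}{\check c_{\vartheta\rho}^+\, \norm{v}_{L^2(\Gamma,\gamma;\mcX)}} + c_{\mcI}\norm{u_N - w_N}_{\mcA_+} \right)^2 + \norm{u_N - w_N}_{\mcA_+}^2 ,
\end{equation*}
where $u_N$ denotes the $\mcA_+$-Galerkin projection of $u$ onto $\mcV_N$.

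The first step is to control the supremum. Using the residual splitting $\mcR_+(w_N) = \mcR_{+,\Lambda}(w_N) + \mcR_{+,\Xi\setminus\Lambda}(w_N)$, the triangle inequality, and the elementary fact that the supremum of a sum is bounded by the sum of the suprema, the numerator decomposes into the active and the inactive contribution. Since the deterministic and the tail proposition provide the bounds $c\Det\est\Det(w_N)\norm{v}_{L^2(\Gamma,\gamma;\mcX)}$ and $\est\param(w_N)\norm{v}_{L^2(\Gamma,\gamma;\mcX)}$ \emph{uniformly} in $v\in\mcV_{\vartheta\rho}$, I obtain
\begin{equation*}
\sup_{v \in \mcV_{\vartheta\rho}\setminus\{0\}} \frac{\abs{\langle \mcR_+(w_N), (\id-\mcI)v\rangle_{\vartheta\rho}}}{\norm{v}_{L^2(\Gamma,\gamma;\mcX)}} \leq c\Det\est\Det(w_N) + \est\param(w_N).
\end{equation*}
Dividing by $\check c_{\vartheta\rho}^+$ and replacing $c_{\mcI}\norm{u_N-w_N}_{\mcA_+}$ by $c_{\mcI}(\check c_{\vartheta\rho}^+)^{-1}\est\disc(w_N)$ via the algebraic proposition, the expression inside the outer parentheses is bounded by $(\check c_{\vartheta\rho}^+)^{-1}\bigl(c\Det\est\Det(w_N) + \est\param(w_N) + c_{\mcI}\est\disc(w_N)\bigr)$.

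Finally, I would apply the algebraic proposition once more to the trailing term, $\norm{u_N-w_N}_{\mcA_+}^2 \le (\check c_{\vartheta\rho}^+)^{-2}\est\disc(w_N)^2$, use monotonicity of $t\mapsto t^2$ on $[0,\infty)$ (all quantities involved being nonnegative), and factor out the common prefactor $(\check c_{\vartheta\rho}^+)^{-2}$; this produces exactly $\norm{u-w_N}_{\mcA_+}^2 \le (\check c_{\vartheta\rho}^+)^{-2}\est_{\mathrm{all}}(w_N)^2$ with $\est_{\mathrm{all}}$ as defined in the statement. The only point that deserves a moment's attention is that the coercivity constant $\check c_{\vartheta\rho}^+$ must be made to appear uniformly in every summand before the square is taken, which is precisely why the algebraic estimate is invoked for the second occurrence of $\norm{u_N-w_N}_{\mcA_+}$ as well, rather than leaving that term untouched.
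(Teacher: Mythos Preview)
Your proposal is correct and follows exactly the approach the paper intends: the paper merely states that ``a combination of the above estimates yields an overall error estimator'' without spelling out the details, and your assembly of Theorem~\ref{thm:mainError} with the three propositions (splitting the residual, bounding each piece, and pulling out the common factor $(\check c_{\vartheta\rho}^+)^{-2}$) is precisely that combination.
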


\begin{remark}
In order to get suitable measures for the estimators, the squared density $\zeta_{\vartheta\rho}^2$ appears, which upon scaling with 
\begin{equation}
c_{\sigma} := \prod_{\ell=1}^L \frac{1}{\sigma_\ell \sqrt{2 - \sigma_\ell^2}}
\end{equation}
again is a Gaussian measure with standard deviation $\sigma' = (\sigma_\ell')_{1\leq \ell \leq L}$ for
\begin{equation}
\label{eq:new-sig}
\sigma_\ell' := \frac{\sigma_\ell}{\sqrt{2 - \sigma_\ell^2}}.
\end{equation} 


First, this adds a restriction on $\vartheta$ such that the argument in the square root is positive. This is fulfilled if $\exp(2 \vartheta \rho \alpha_1) < 2$, since $(\alpha_m)_{1\leq m \leq M}$ is a decreasing series, which can be ensured for some $\vartheta$ small enough.
Second, it is important to check whether the new measure is {\em weaker} or {\em stronger} than $\gamma_{\vartheta \rho}$ \cite{Gittelson2011a}, i.e., which space contains the other.
Since
\begin{equation}
\sigma_\ell' = \frac{\sigma_\ell}{\sqrt{2 - \sigma_\ell^2}} = \frac{\exp(\vartheta \rho \alpha_\ell)}{\sqrt{2 - \exp(2\vartheta \rho \alpha_\ell)}} \geq \exp(\vartheta \rho \alpha_\ell) = \sigma_\ell,
\end{equation}
functions that are integrable with respect to the measure $\gamma_{\vartheta \rho}$ are not necessarily integrable with respect to the squared measure.
However, since $f$ is independent of the parameters and $\mathcal A_+(w_N) \in \mathcal X^* \otimes \mathcal Y(\Xi)$ has a polynomial chaos expansion of finite degree, the residual $\mathcal R_+(w_N)$ is integrable over the parameters for any Gaussian measure and therefore it is also integrable with respect to the squared measure. 
\end{remark}

\subsection{Efficient computation of the different estimators}
\label{secc:efficient-computation}

The error estimators can be calculated efficiently in the TT format. 
For each element $T \in \mathcal T$ of the triangulation, the residual estimator is given by
\begin{align}
\est_{\mathrm{det},T}(w_N)^2 &= h_T^2 \| \mathcal R_{+, \Lambda}(w_N) \zeta_{\vartheta \rho} \|_{L^2(\Gamma,\gamma;L^2(T))}^2 \\
&= h_T^2 \int_\Gamma \int_T \Biggl(f + \sum_{\eta \in \Lambda} \ddiv \res(x,\eta) \Hthrh_\eta \Biggr)^2 \, \zeta_{\vartheta\rho}^2 \dx{x} \dgam{y} \\
& = h_T^2 (f,f)_{L^2(T)}\int_\Gamma \zeta_{\vartheta\rho}^2 \, \dgam{y} \\
& \qquad + 2 h_T^2 \sum_{\eta \in \Lambda} ( f, \ddiv \res(x,\eta))_{L^2(T)} \int_\Gamma \Hthrh_\eta \, \zeta_{\vartheta\rho}^2 \dgam{y} \\
&\qquad + \sum_{\eta\in\Lambda} \sum_{\eta'\in\Lambda} ( \ddiv \res(x,\eta), \ddiv \res(x,\eta') )_{L^2(T)} \\
&\qquad \qquad \times \int_\Gamma \Hthrh_\eta \Hthrh_{\eta'} \, \zeta_{\vartheta\rho}^2 \dgam{y}.
\end{align}
A complication of the change of the measure to $\gamma$ and the involved weight $\zeta_{\vartheta\rho}^2$ is the fact that the shifted Hermite polynomials $\Hthrh$ are not orthogonal with respect to this measure. 
However, this property can be restored easily by calculating the basis change integrals beforehand.
This results in another tensor product operator that is defined element-wise for $\eta,\eta' \in \Xi$ by
\begin{align}
\tilde{\mathbf H}(\eta,\eta') &:= \tilde Z_1(\eta_1,\eta_1') \cdots \tilde Z_L(\eta_L,\eta_L'), \\
\tilde Z_\ell(\eta_\ell,\eta_\ell') &:= \int_\Gamma \Hthrh_{\eta_\ell} \Hthrh_{\eta'_\ell} \, \zeta_{\vartheta\rho,\ell}^2 \dgam{y_\ell}.
\end{align}
This operator encodes the basis change to the squared measure and can be inserted in order to calculate the scalar product.
With this, the estimator takes the form
\begin{align}
\est_{\mathrm{det},T}(w_N)^2 &= h_T^2 (f,f)_{L^2(T)} \int_\Gamma \zeta_{\vartheta\rho}^2 \, \dgam{y} \\
&\qquad + 2 h_T^2 \sum_{\eta \in \Lambda} \tilde{\mathbf H}(\eta,0) ( f, \ddiv \res(x,\eta))_{L^2(T)} \\
&\qquad + \sum_{\eta\in\Lambda} \sum_{\eta'\in\Lambda} \tilde{\mathbf H}(\eta,\eta') ( \ddiv \res(x,\eta), \ddiv \res(x,\eta') )_{L^2(T)}.
\end{align}
Since $\tilde{\mathbf H}$ is a tensor product operator, this summation can be done component-wise, i.e.,~performing a matrix-vector multiplication of every component of the operator $\tilde{\mathbf H}$ with the corresponding component of the tensor function $r$.

Similarly, for the jump over the edge $F$ we obtain the estimator
\begin{equation}
\est_{\mathrm{det},F}(w_N)^2 = h_F \sum_{\eta\in\Lambda} \sum_{\eta'\in\Lambda} \tilde{\mathbf H}(\eta,\eta') ( [\![ \ddiv \res(x,\eta) ]\!] , [\![\ddiv \res(x,\eta') ]\!] )_{L^2(F)}.
\end{equation}
Analogously to the affine case dealt with in \cite{EPS}, both of these estimators can then be computed efficiently in the TT format.
The parametric error estimator $\est\param(w_N)$ can be estimated in a similar way.

To gain additional information about the residual influence of certain stochastic dimensions, we sum over specific index sets.
Let 
\begin{align}
\label{eq:Xin}
\Xi_m := \{ (\eta_1,\ldots,d_m,\ldots,\eta_M,0,\ldots) \in \mathcal F : \ &\eta_\ell = 0,\ldots,d_\ell-1, \\
&\ell=1,\ldots,\bcancel m,\ldots,M \},
\end{align}
where the strike through means that $\ell$ takes all values but $m$.
For every $m=1,2,\ldots$, and $w_N \in\mathcal V_N$ we define 
\begin{align}
\est_{\mathrm{param},m}(w_N)^2 &:= \int_\Gamma \int_D \zeta_{\vartheta\rho}^2 \Bigl| \sum_{\eta\in\Xi_m} \res(x,\eta) \Hthrh_\eta \Bigr|^2 \, \dx{x}\dgam{y}.
\end{align}
Using the same arguments and notation as above, we can simplify
\begin{align}
\est_{\mathrm{param},m}(w_N)^2 &= \int_D \sum_{\eta\in\Xi_n} \Bigl( \res(x,\eta) \cdot \sum_{\eta'\in\Xi_m} \tilde{\mathbf H}(\eta,\eta') \res(x,\eta') \Bigr) \, \dx{x}.
\end{align}
These operations, including the calculation of the discrete error estimator~\eqref{eq:log-disc-est}, can be executed efficiently in the TT format.

\section{Fully Adaptive Algorithm}
\label{sec:Fully Adaptive Algorithm}

With the derived error estimators of the preceding sections, it is possible to refine all discretization parameters accordingly.
As discussed before, the deterministic estimator assesses the error that arises from the finite element method.
The discrete error estimator evaluates the error made by a low rank approximation.
The rest of the error is estimated by the parametric error estimator.

The adaptive algorithm described in this section is similar to the algorithms presented in~\cite{EGSZ,EGSZ2,EPS}.
Given some mesh $\mathcal T$, a fixed polynomial degree $p$, a finite tensor set $\Lambda\subset\mathcal F$, and a start tensor $W$ with TT rank $r$, we assume that a numerical approximation $w_N \in \mathcal V_N$ is obtained by a function
\begin{equation}
w_N^+ \leftarrow \Solve[\Lambda,\mathcal T,r,W]
\end{equation}
where the superscript $+$ always denotes the updated object.
In our implementation, we used the preconditioned ALS algorithm but other solution algorithms are feasible as well.
The lognormal error indicators \ref{sec:Error Estimates} and thus the overall upper bound $\est_{\mathrm{all}}(w_N)$ in Corollary~\ref{cor:full_error} are computed by the methods
\begin{align}
(\est_{\mathrm{det},T}(w_N))_{T\in\mathcal T},\est\Det(w_N) &\leftarrow \Estimate_x[w_N,\Lambda,\mathcal T,p],\\
(\est_{\mathrm{param},m}(w_N))_{m\in\mathbb N}, \est\param(w_N) &\leftarrow \Estimate_y[w_N,\Lambda],\\
\est\disc(w_N) &\leftarrow \Estimate_{\mathrm{ALS}}[w_N].
\end{align}

\begin{algorithm}[t]
\SetKwInOut{Input}{input}\SetKwInOut{Output}{output}
\Input{Old solution $w_N$ with solution tensor $W$ and rank $r$; \\
mesh $\mathcal T$ with degrees $p$; \\
index set $\Lambda$;
accuracy $\epsilon_{\mathrm{TTASGFEM}}$.}
\Output{New solution $w_N^+$ with new solution tensor $W^+$; \\
new mesh $\mathcal T^+$, or new index set $\Lambda^+$, or new rank $r+1$.}
\BlankLine
$w_N^+ \leftarrow \Solve[\Lambda,\mathcal T,r,W]$\;
$(\est_{\mathrm{det},T})_{T\in\mathcal T},\est\Det \leftarrow \Estimate_x[w_N,\Lambda,\mathcal T,p]$\;
$(\est_{\mathrm{param},m})_{m\in\mathbb N}, \est\param \leftarrow \Estimate_y[w_N,\Lambda]$\;
$\est\disc \leftarrow \Estimate_{\mathrm{ALS}}[w_N]$\;
\While{$\est_{\mathrm{all}} > \epsilon_{\mathrm{TTASGFEM}}$}{
\If{$\est\Det = \max\{\est\Det,\est\param,\est\disc\}$}
{
  $\mathcal T_{\mathrm{mark}} \leftarrow \Mark_x[\ntheta, (\est_{\mathrm{det,T}})_{T\in\mathcal T}, \est\Det]$\;
  $\mathcal T^+ \leftarrow \Refine_x[\mathcal T,\mathcal T_{\mathrm{mark}}]$\;
}
\ElseIf{$\est\param = \max\{\est\Det,\est\param,\est\disc\}$}{
  $\mathcal I_{\mathrm{mark}} \leftarrow \Mark_y[(\est_{\mathrm{param},m})_{m\in\mathbb N}, \est\param]$\;
  $\Lambda^+ \leftarrow \Refine_y[\Lambda,\mathcal I_{\mathrm{mark}}]$\;
}
\Else{
$W^+ \leftarrow \Refine_{\mathrm{TT}}[W]$\;
}
$w_N^+ \leftarrow \Solve[\Lambda,\mathcal T,r,W^+]$\;
$(\est_{\mathrm{det},T})_{T\in\mathcal T},\est\Det \leftarrow \Estimate_x[w_N,\Lambda,\mathcal T,p]$\;
$(\est_{\mathrm{param},m})_{m\in\mathbb N}, \est\param \leftarrow \Estimate_y[w_N,\Lambda]$\;
$\est\disc \leftarrow \Estimate_{\mathrm{ALS}}[w_N]$\;
}
\caption{The TTASGFEM algorithm.}
\label{alg:asgfem}
\end{algorithm}

Depending on which error is largest, either the mesh is refined, or the index set $\Lambda$ is enlarged, or the rank $r$ of the solution is increased. This is done as follows:
\begin{itemize}
\item If the deterministic error $\est\Det(w_N)$ outweighs the others, we mark the elements $T \in \mathcal T$ that have the largest error $\est_{\mathrm{det},T}(w_N)$ until the sum of errors on the elements in this marked subset $\mathcal T_{\mathrm{mark}} \subset \mathcal T$ exceeds a certain ratio $0 < \ntheta < 1$. This is called the {\em D\"orfler marking strategy}
\begin{equation}
\sum_{T \in \mathcal T_{\mathrm{mark}}} \est_{\mathrm{det},T}(w_N) \geq \ntheta \est\Det(w_N).
\end{equation}
We denote this procedure by
\begin{equation}
\mathcal T_{\mathrm{mark}} \leftarrow \Mark_x[\ntheta,(\est_{\mathrm{det},T}(w_N,\Lambda))_{T\in\mathcal T}, \est\Det(w_N,\Lambda,\mathcal T)].
\end{equation}
The elements in this subset are subsequently refined by 
\begin{equation}
\mathcal T^+ \leftarrow \Refine_x[\mathcal T, \mathcal T_{\mathrm{mark}}].
\end{equation}
\item In case the parametric error $\est\param(w_N)$ dominates, we use estimators $\est_{\mathrm{param},m}(w_N)$ in order to determine which components need to be refined. Here, we also mark until the D\"orfler property is satisfied, that is, we obtain a subset $\mathcal I_{\mathrm{mark}} \subset \mathbb N$ such that
\begin{equation}
\sum_{m \in \mathcal I_{\mathrm{mark}}} \est_{\mathrm{param},m}(w_N) \geq \ntheta \est\param(w_N).
\end{equation}
This is the marking
\begin{equation}
\mathcal I_{\mathrm{mark}} \leftarrow \Mark_y[\ntheta,(\est_{\mathrm{param},m}(w_N))_{m\in\mathbb N}, \est\param(w_N)],
\end{equation}
and we refine by increasing $d_m^+ \leftarrow d_m + 1$ for $m \in \mathcal I_{\mathrm{mark}}$
\begin{equation}
\Lambda^+ \leftarrow \Refine_y[\Lambda,\mathcal I_{\mathrm{mark}}].
\end{equation}
\item Finally, if $\est\disc(w_N)$ exceeds the other errors, we simply add a random tensor of rank 1 to the solution tensor $W$.
This increases all TT ranks of $W$ by 1 almost surely.
It would also be possible to add an approximation of the discrete residual as this is also in TT format.
However, since the ALS algorithm will be performed after the refinement, the advantage of this rather costly improvement has shown to be negligible~\cite{Uschmajew2014}.
Thus we get
\begin{equation}
W^+ \leftarrow \Refine_{\mathrm{TT}}[W].
\end{equation}
\end{itemize}

A single iteration step of the adaptive algorithm returns either a refined $\mathcal T^+$ or $\Lambda^+$ or the tensor format solution with increased rank $W^+$ and then solves the problem with these properties. This is done repeatedly until the overall error estimator $\err_{\mathrm{all}}(w_N)$ is sufficiently small, i.e.,~defined error bound $\epsilon_{\mathrm{TTASGFEM}}$ or a maximum problem size is reached. This procedure is given by the function $\mathrm{TTASGFEM}$, displayed in Algorithm~\ref{alg:asgfem}.
The upper error bounds directly follow from Corollary~\ref{cor:full_error}. 

\section{Numerical Experiments}
\label{sec:Numerical Experiments}

In this section we examine the performance of the proposed adaptive algorithm with some benchmark problems.
The computations are carried out with the open source framework \texttt{ALEA}~\cite{ez:alea}.
The FE discretization is based on the open source package \texttt{FEniCS}~\cite{fenics}.
The shown experiments are similar to the ones of the predecessor paper~\cite{EPS} in Section 7.
As before, the model equation~\eqref{eq:model} is computed for different lognormal coefficients on the square domain.
The derived error estimator is used as a refinement indicator.
Of particular interest is the observed convergence of the true (sampled) expected energy error and the behaviour of the error indicator.
Moreover, we comment on the complexity of the coefficient discretization.

\subsection{Evaluation of the Error}
\label{sec:evaluation of the error}

The real error of the computed approximation is determined by a Monte Carlo estimation.
For this, a set of $\Nmc$ independent samples $\left(y^{(i)}\right)_{i=1}^\Nmc$ of the stochastic parameter vector is considered.
By the tensor structure of the probability measure $\gamma=\bigotimes_{m\in\bbN} \gamma_1$, each entry of the vector valued sample $y^{(i)}$ is drawn with respect to $\mathcal{N}_1$.
The parametric solution $w_N\in\mathcal{V}_N(\Lambda; \mcT, p)$ obtained by the adaptive algorithm at a sample $y^{(i)}$, is compared to the corresponding (deterministic) sampled solution $u(y^{(i)})\in\mcX_{p'}(\widetilde\mcT)$ on a much finer finite element space subject to a reference triangulation $\widetilde\mcT$, obtained from uniform refinements of the finest adaptively created mesh, up to the point where we obtain at least $10^5$ degrees of freedom with Lagrange elements of order $p'=3$.
For computations, the truncation parameter applied to the infinite sum in~\eqref{eq:a-param}, controlling the length of the affine field representation, is set to $M=100$.
The mean squared error is determined by a Monte-Carlo quadrature,
\begin{equation}
\label{eq:errornorm}
\int_\Gamma\norm{u(y) - w_N(y)}^2_{\mcX} \mathrm{d}\gamma(y) \approx \frac{1}{\Nmc}\sum_{i=1}^\Nmc \norm{u(y^{(i)}) - w_N(y^{(i)})}^2_{\mcX_p(\widetilde\mcT)}.
\end{equation}
A choice of $\Nmc=250$ proved to be sufficient to obtain a reliable estimate in our experiments.

\subsection{The stochastic model problem}
\label{sec:stochastic diffusion problem}

In the numerical experiments, we consider the stationary diffusion problem~\eqref{eq:model} on the square domain $D=(0,1)^2$.
As in \cite{EPS, EGSZ, EGSZ2, EM}, the expansion coefficients of the stochastic field~\eqref{eq:a-param} are given by
\begin{equation}
b_m(x) \defeq \gamma m^{-\sigma} \cos(2\pi \rho_1(m) x_1)\cos(2\pi \rho_2(m) x_2), \text{ for } m\geq 1.
\label{eq:experiments-coeff-form}
\end{equation}
We chose the scaling $\gamma=0.9$.

Moreover,
\begin{equation}
\label{eq:beta12}
\rho_1(m) = m - k(m)(k(m)+1)/2 \quad\text{ and }\quad \rho_2(m) = k(m) - \rho_1(m),
\end{equation}
with $k(m) = \lfloor -1/2 + \sqrt{1/4+2m} \rfloor$, \ie, the coefficient functions $b_m$ enumerate all planar Fourier sine modes in increasing total order.
To illustrate the influence which the stochastic coefficient plays in the adaptive algorithm, we examine the expansion with varying decay, setting $\sigma$ in~\eqref{eq:experiments-coeff-form} to different values.
The computations are carried out with conforming FE spaces of polynomial degree $1$ and $3$.

The fully discretized problem is solved in the TT format using the \emph{Alternating Least Squares (ALS)} algorithm as introduced in \cite{EPS}. Other algorithms like Riemannian optimization are also feasible~\cite{Absil2008,Kressner2014}.
The ALS has a termination threshold of $10^{-12}$ that needs to be reached in the Frobenius norm of the difference of two successive iteration results. 

For our choice of the coefficient field, the introduced weights in~\eqref{eq:Btr} are set to $\rho=1$ and $\theta=0.1$.

\subsection{Tensor train representation of the coefficient}
\label{sec:tt-coef-numeric}

Since the tensor approximation of the coefficient is the starting point for the discretization of the operator, we begin with an examination of the rank dependence of the coefficient approximation scheme given in Algorithm~\ref{alg:coeff-splitting}.
For this, we fix the multi-index set such that the incorporated Hermite polynomials are of degree $15$ in each stochastic dimension, i.e.,
\begin{equation}
\Delta = \{\nu\in\mathcal{F}\; :\; \nu_\ell = 0, \ldots, 16, \; \ell=1,\ldots, L\}.
\end{equation}
As a benchmark, we chose a slow decay with $\sigma=2$ and set the quadrature rule\footnote{the statements are with regard to the respective FE discretization in \texttt{FEniCS}} to exactly integrate polynomials of degree 7 with a grid such that the number of quadrature nodes is at least $P_{\mathrm{quad}} = 10^4$.
In the following, the relative root mean squared (RRMS) error
\begin{equation}
\mathcal{E}(a, a_{\Delta, s}) := \left(\mathbb{E}[\norm{a - a_{\Delta, s}}^2_{L^2(D)} / \norm{a}_{L^2(D)}^2]\right)^{1/2}
\end{equation}
is compared to the growth in degrees of freedom with respect to various tensor ranks.
Numerically, this expression is computed by a Monte Carlo approximation as described in Section~\ref{sec:evaluation of the error} with the reference mesh $\widetilde{\mathcal{T}}$.

By denoting $A=(A_0,A_1,\ldots, A_L)$ the component tensors of $a_{\Delta, s}$ in~\eqref{eq:tt-coef}, where $A_0[x_k, k_1] = a_0[k_1](x_k)$ corresponds to the evaluation of $a_0[k_1]$ at every node $x_k$ of $\widetilde{\mathcal{T}}$, we can apply the notion of tt-dofs~\eqref{eq:ttdofs} to the discrete tensor train coefficient.
To highlight the crucial choice of the rank parameter $s=(1, s_1,\ldots, s_L, 1) $ in $a_{\Delta, s}$, we let the maximal attainable rank $s_{\textrm{max}}$ vary.

\begin{figure}
  \def\arraystretch{1.3}
  \begin{tabular}{@{}llllll@{}}
  \toprule
    & $s_{\textrm{max}}$ & $\mathcal{E}(a, a_{\Delta, s})$ & tt-dofs($A$) & full-dofs($A$) & CPU-time \\
    \midrule
    
  $L=10$ \\
  & $10$ & $1.21\times 10^{-3}$ & $2.04\times 10^6$ & \multirow{4}{*}{$3.02\times 10^{17}$} & $1.7$ s \\
  & $20$ & $6.25\times 10^{-4}$ & $4.12\times 10^6$ & & $4.56$ s \\
  & $50$ & $6.18\times 10^{-4}$ & $1.05\times 10^7$ & & $16.52$ s \\
  & $100$ & $6.18\times 10^{-4}$ & $2.15\times 10^7$ & & $70.29$ s \\
  
  $L=50$ \\
    
  & $10$ & $1.17\times 10^{-3}$ & $2.11\times 10^6$ & \multirow{4}{*}{$3.26\times 10^{65}$} & $9.14$ s \\
  & $20$ & $3.26\times 10^{-4}$ & $4.36\times 10^6$ & & $13.56$ s \\
  & $50$ & $5.98\times 10^{-5}$ & $1.20\times 10^7$ & & $50.60$ s \\
  & $100$ & $5.16\times 10^{-5}$ & $2.75\times 10^7$ & & $247.63$ s \\

  $L=100$ \\
  
  & $10$  & $1.16\times 10^{-3}$ & $2.18\times 10^6$ & \multirow{4}{*}{$5.25\times 10^{125}$} & $19.09$ s \\
  & $20$  & $3.20\times 10^{-4}$ & $4.66\times 10^6$ & & $27.16$ s \\
  & $50$  & $6.33\times 10^{-5}$ & $1.38\times 10^7$ & & $100.53$ s \\
  & $100$  & $7.47\times 10^{-6}$ & $3.50\times 10^7$ & & $498.47$ s \\
  \bottomrule
  \end{tabular}
  
  \caption{Comparison of different coefficient field approximations with fixed stochastic parameter set of order $L=\{10, 50, 100\}$, fixed stochastic polynomial degree $15$ and different rank configurations having maximal rank $s_{\textrm{max}} = \{10, 20, 50, 100\}$}
  \label{fig:coef-err}
\end{figure}

It can be seen in Figure~\ref{fig:coef-err} that a higher rank yields a more accurate approximation of the coefficient, as long as the stochastic expansion $L$ is long enough.
For small numbers of $L$, the RRMS does not decrease any further than $6\times 10^{-4}$, even when increasing the maximal attainable rank.
Otherwise, for up to $L=100$ terms in the affine coefficient field parametrisation, a small rank parameter of $s_{\mathrm{max}}=10$ gives a reasonable RRMS error of $1\times 10^{-3}$ which can be improved by increasing $s_{\mathrm{max}}$.

It should be pointed out that the used approximation only becomes feasible computationally by the employed tensor format since without low-rank representation the amount of degrees of freedom grows exponentially in der number of expansion dimensions.
Furthermore, since the tensor complexity and arithmetic operations depend only linearly on the number of expansion dimensions, the computational time scales linearly as illustrated in the last column of Figure~\ref{fig:coef-err} showing the average run time of 10 runs\footnote{run on an 8GB Intel Core i7-6500 laptop}.

\subsection{Adaptivity in physical space}
\label{sec:adaptivity in physical space}

As a first example for an adaptive discretization, we examine the automatic refinement of the physical FE space only.
For this, the stochastic expansion of the coefficient is fixed to some small value $L=5$ in~\eqref{eq:tt-coef}, which also is assumed for the corresponding reference solution.
The considered polynomial (Hermite chaos) degree of the approximation in each dimension is fixed to $d_1 = \ldots = d_5 = 10$, which can be considered sufficiently large for the respective problem, given an algebraic decay rate of $\sigma=2$ in the coefficient.

Although this experiment does not illustrate the performance of the complete algorithm, it nevertheless depicts the varying convergence rates for different polynomial orders in the FE approximation.
The rank of the tensor train approximation is fixed to $r_{\mathrm{max}}=10$, which is sufficient due to the low stochastic dimensions.
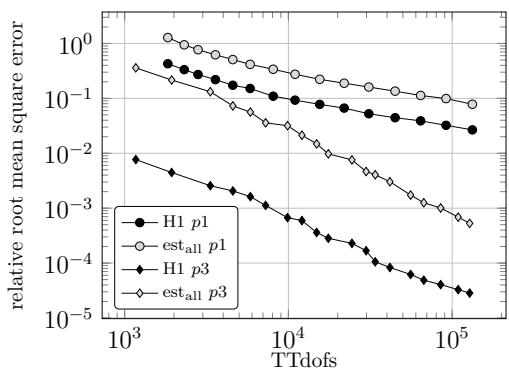
\begin{figure}[h]
  \begin{tikzpicture}[tikzpic options]
	\begin{loglogaxis}[sol error plot]
\pgfplotstableread[col sep=comma]{results/20180531022458sample/adaptive_P3_domsquare_sol_error.dat}\dataPPP
\pgfplotstableread[col sep=comma]{results/20180403175058sample/thx0.3_thy0.0_adaptive_P2_domsquare_scale1.0_ezw0.5_rzw20.0_sol_error.dat}\dataPP
\pgfplotstableread[col sep=comma]{results/20180403181243sample/thx0.3_thy0.0_adaptive_P1_domsquare_scale1.0_ezw0.5_rzw20.0_sol_error.dat}\dataP

\addplot table[x=tt_dof,y expr=\thisrow{H1_rel}] {\dataP}; \addlegendentry{H1 $p1$};
\addplot table[x=tt_dof,y expr=(((3*\thisrow{eta}+\thisrow{zeta}+\thisrow{resnorm})^2 + \thisrow{resnorm}^2)^0.5)] {\dataP}; \addlegendentry{$\est_{\mathrm{all}}$ $p1$};
\addplot table[x=tt_dof,y expr=\thisrow{rms_H1_rel}] {\dataPPP}; \addlegendentry{H1 $p3$};
\addplot table[x=tt_dof,y expr=(((3*\thisrow{eta}+\thisrow{zeta}+\thisrow{resnorm})^2 + \thisrow{resnorm}^2)^0.5)] {\dataPPP}; \addlegendentry{$\est_{\mathrm{all}}$ $p3$};
	\end{loglogaxis}
  \end{tikzpicture}
  \caption{Relative sampled mean squared $H^1(D)$ error for fixed stochastic dimension $L=5$ and adaptive refinement of the physical mesh.
  Each setting is shown along its respective overall error estimator as defined in Corollary~\ref{cor:full_error} and plotted against the total number of degrees of freedom in the TT representation.
  Considered are FE approximations of order $p=1$ and $p=3$.}
  \label{fig:mesh-only}
\end{figure}

It can be observed in Figure~\ref{fig:mesh-only} that the error estimator follows the rate of convergence of the sampled error.
Moreover, the higher-order FE method exhibits a higher convergence rate as expected.
This could also be observed in the (much simpler) affine case scrutinized in the preceding works~\cite{EPS,EGSZ,EGSZ2,EM}.

\subsection{Fully adaptive algorithm}
\label{sec:fully adaptive algorithm}

The fully adaptive algorithm described in Algorithm~\ref{alg:asgfem} is instantiated with a small initial tensor with full tensor rank $r_1=2$ consisting of a single $M=1$ stochastic component discretized with a linear polynomial $d_1=2$ and a physical mesh with $\abs{\mcT}=32$ elements for linear ansatz functions $p=1$ and $\abs{\mcT}=8$ for $p=3$.
The marking parameter is set to $\ntheta=0.5$.

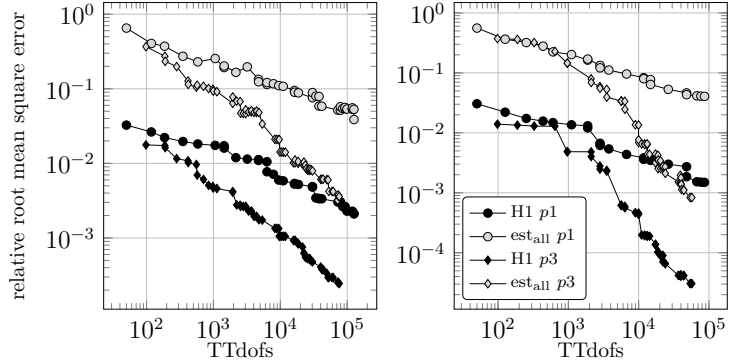
\begin{figure}[h]
  \begin{tikzpicture}[tikzpic options]
	\begin{loglogaxis}[sol error plot dbl 1]
\pgfplotstableread[col sep=comma]{results/20180524032340sample/adaptive_P3_domsquare_sol_error.dat}\dataPPP
\pgfplotstableread[col sep=comma]{results/20180525045915sample/adaptive_P1_domsquare_sol_error.dat}\dataP

\addplot table[x=tt_dof,y expr=(\thisrow{rms_H1_rel})] {\dataP}; \addlegendentry{H1 $p1$};
\addplot table[x=tt_dof,y expr=(((\thisrow{eta}+\thisrow{zeta}+\thisrow{resnorm})^2 + \thisrow{resnorm}^2)^0.5)^1] {\dataP}; \addlegendentry{$\est_{\mathrm{all}}$ $p1$};
\addplot table[x=tt_dof,y expr=(\thisrow{rms_H1_rel})] {\dataPPP}; \addlegendentry{H1 $p3$};
\addplot table[x=tt_dof,y expr=(((\thisrow{eta}+\thisrow{zeta}+\thisrow{resnorm})^2 + \thisrow{resnorm}^2)^0.5)^1] {\dataPPP}; \addlegendentry{$\est_{\mathrm{all}}$ $p3$};
\legend{}
	\end{loglogaxis}
  \end{tikzpicture}
  \begin{tikzpicture}[tikzpic options]
	\begin{loglogaxis}[sol error plot dbl 2]
\pgfplotstableread[col sep=comma]{results/20180525155407sample/adaptive_P3_domsquare_sol_error.dat}\dataPPP
\pgfplotstableread[col sep=comma]{results/20180524221112sample/adaptive_P1_domsquare_sol_error.dat}\dataP

\addplot table[x=tt_dof,y expr=(\thisrow{rms_H1_rel})] {\dataP}; \addlegendentry{H1 $p1$};
\addplot table[x=tt_dof,y expr=(((\thisrow{eta}+\thisrow{zeta}+\thisrow{resnorm})^2 + \thisrow{resnorm}^2)^0.5)] {\dataP}; \addlegendentry{$\est_{\mathrm{all}}$ $p1$};
\addplot table[x=tt_dof,y expr=(\thisrow{rms_H1_rel})] {\dataPPP}; \addlegendentry{H1 $p3$};
\addplot table[x=tt_dof,y expr=(((\thisrow{eta}+\thisrow{zeta}+\thisrow{resnorm})^2 + \thisrow{resnorm}^2)^0.5)] {\dataPPP}; \addlegendentry{$\est_{\mathrm{all}}$ $p3$};

	\end{loglogaxis}
  \end{tikzpicture}
  \caption{Relative, sampled, mean squared $H^1(D)$ error for the fully adaptive refinement. Each setting is shown along its respective overall error estimator as defined in Corollary~\ref{cor:full_error} and plotted against the total number of degrees of freedom in the representation. Considered are finite elements approximation of order $p=1$ and $p=3$ and slow decay ($\sigma=2$, left) and fast decay ($\sigma=4$, right). }  
  \label{fig:overall}
\end{figure}

Figure~\ref{fig:overall} depicts the real (sampled) mean squared energy error and the corresponding overall error estimator for FE discretizations of degrees $p=1,3$.
On the left-hand side of the figure, a lower decay rate ($\sigma=2$) in the coefficient representation is used, resulting in more stochastic modes to be relevant for an accurate approximation.
The right-hand side shows results for a faster decay rate with $\sigma=4$.
As expected, the convergence rate for the $p=3$ FE discretizations is faster than with $p=1$ FE.
Moreover, for the simpler problem with fast decay, we achieve a smaller error with a comparable number of degrees of freedom than in the harder slow decay setting.

\begin{remark}
Note that we neglect the (large) factor $\check c^+_{\theta\rho}$ in Corollary~\ref{cor:full_error}, which depends on the choice of weights $\theta$ and $\rho$ of the discrete space.
A detailed analysis of how to optimally select these weights is outside the scope of this article.
\end{remark}

\begin{figure}
  \def\arraystretch{1.3}
  \begin{tabular}{@{}lllllll@{}}

  \multicolumn{7}{c}{finite element order $p=1$ and slow coefficient decay $\sigma=2$} \\
  \toprule
    Iteration & $M$ & $d^{\textrm{max}}$ & $r^{\mathrm{max}}$ & m-dofs($W_N$) & tt-dofs($W_N$) & op-dofs($W_N$)  \\
  \cmidrule{1-7}
    
  5/37  & $1$ & $1$ & $2$ & $292$ & $584$ & $873$ \\
  15/37 & $2$ & $2$ & $5$ & $1577$ & $7900$ & $73965$ \\
  20/37 & $3$ & $4$ & $8$ & $2330$ & $18656$ & $170247$ \\
  30/37 & $5$ & $4$ & $13$ & $6586$ & $85847$ & $678382$ \\
  37/37 & $6$ & $5$ & $19$ & $6586$ & $126330$ & $734880$  \\

    \multicolumn{7}{c}{finite element order $p=1$ and fast coefficient decay $\sigma=4$} \\
  \toprule
    
  5/22  & $1$ & $1$ & $2$ & $302$ & $604$ & $1200$ \\
  10/22 & $1$ & $3$ & $3$ & $941$ & $2826$ & $11196$ \\
  15/22 & $1$ & $4$ & $5$ & $2951$ & $14755$ & $44115$ \\
  22/22 & $2$ & $5$ & $9$ & $9608$ & $86499$ & $962022$ \\
  \bottomrule
  \end{tabular}
  \caption{Stochastic expansion length, maximal polynomial chaos degrees, tensor ranks and degrees of freedom for the physical and (compressed) stochastic discretizations as well as for the operator in TT format for selected iteration steps in the fully adaptive algorithm.
  Finite elements of order $p=1$ are used to solve the problem with coefficient decay rates of $\sigma\in\{2, 4\}$.}
  \label{fig:adapt-process-p1}
\end{figure}

\begin{figure}
  \def\arraystretch{1.3}
  \begin{tabular}{@{}lllllll@{}}

  \multicolumn{7}{c}{finite element order $p=3$ and slow coefficient decay $\sigma=2$} \\
  \toprule
    Iteration & $M$ & $d^{\textrm{max}}$ & $r^{\mathrm{max}}$ & m-dofs($W_N$) & tt-dofs($W_N$) & op-dofs($W_N$)  \\
  \cmidrule{1-7}
    
  5/65  & $1$ & $2$ & $2$ & $139$ & $417$ & $816$ \\
  20/65 & $5$ & $3$ & $12$ & $241$ & $3040$ & $37537$ \\
  35/65 & $7$ & $6$ & $21$ & $403$ & $9856$ & $130799$ \\
  50/65 & $9$ & $11$ & $33$ & $568$ & $27662$ & $239273$ \\
  65/65 & $16$ & $11$ & $46$ & $1078$ & $76173$ & $353153$  \\

    \multicolumn{7}{c}{finite element order $p=3$ and fast coefficient decay $\sigma=4$} \\
    \cmidrule{1-7}
    
  5/67  & $1$ & $2$ & $2$ & $322$ & $966$ & $2226$ \\
  20/67 & $2$ & $7$ & $7$ & $991$ & $9974$ & $101110$ \\
  35/67 & $4$ & $13$ & $15$ & $1207$ & $20423$ & $250913$ \\
  50/67 & $6$ & $21$ & $19$ & $1909$ & $39998$ & $434669$ \\
  67/67 & $6$ & $34$ & $22$ & $2404$ & $61920$ & $607076$ \\
  \bottomrule

  \end{tabular}
  \caption{Stochastic expansion length, maximal polynomial chaos degrees, tensor ranks and degrees of freedom for the physical and (compressed) stochastic discretizations as well as for the operator in TT format for selected iteration steps in the fully adaptive algorithm.
  Finite elements of order $p=3$ are used to solve the problem with coefficient decay rates of $\sigma\in\{2, 4\}$.}
  \label{fig:adapt-process-p3}
\end{figure}

We conclude the numerical observations with Figures~\ref{fig:adapt-process-p1} and~\ref{fig:adapt-process-p3} to display the behaviour of the fully adaptive algorithm.
To allow for more insights, we redefine the physical mesh resolution as m-dofs($W_N$), which is the number of FE degrees of freedom for the respective parametric solution $W_N$.
Moreover, we define the number of degrees of freedom incorporated in the operator~\eqref{eq:operator} generated to obtain the current solution.
For a tensor train operator of dimension $\mathbf{A}\in\mathbb{R}^{(N\times N)\times (q_1 \times q_1) \times \ldots\times (q_L\times q_L)}$ and rank $s=(s_1, \ldots, s_L)$, we define
\begin{equation}
\text{op-dofs}(\mathbf{A}) := N^2 s_1 - s_1^2 + \sum_{\ell=1}^{L-1} (s_\ell q_\ell^2 s_{\ell+1} - s^2_{\ell+1}) + s_L	q_L^2.
\end{equation}
Note that, using a sparse representation of the operator in the first (physical) component, it is possible to reduce number of op-dofs.

Tables~\ref{fig:adapt-process-p1} and~\ref{fig:adapt-process-p3} highlight some iteration steps and the employed stochastic expansion length $M$, the maximal polynomial degree $d^{\textrm{max}}$ (which was usually naturally attained in the first stochastic component), the maximal tensor train rank $r^{\textrm{max}}$ (again most of the time this rank is the separation rank of the physical space and the first stochastic dimension) and the corresponding degrees of freedom.
Notably, for the fast coefficient decay $\sigma=4$, the stochastic expansion is very short since most of the randomness is due to the first few stochastic parameters.
It is reasonable that the respective parameter dimensions require higher polynomial degrees in the approximation.
For the more involved slow decay $\sigma=2$ setting we observe a larger stochastic expansion of the solution and larger tensor ranks.


\footnotesize{
\subsubsection*{Acknowledgments}
M.P. and R.S. were supported by the DFG project ERA Chemistry; additionally R.S. was supported through Matheon by the Einstein Foundation Berlin.
}

\ifwias
	\newpage
\fi
\bibliographystyle{plain}
\bibliography{references}

\end{document}